\documentclass[11pt,reqno]{amsart}
\usepackage{amsmath,amssymb,mathrsfs}
\usepackage{graphicx,cite,cases,arydshln}
\usepackage[dvips]{epsfig,graphicx}
\usepackage{subfig}

\setlength{\topmargin}{-1.5cm}
\setlength{\oddsidemargin}{0.0cm}
\setlength{\evensidemargin}{0.0cm}
\setlength{\textwidth}{16.7cm}
\setlength{\textheight}{23cm}
\headheight 20pt
\headsep    26pt
\footskip 0.4in

\usepackage{algorithm}
\usepackage{algorithmic}

\makeatletter \@addtoreset{equation}{section}

\makeatother

\newtheorem{theorem}{Theorem}[section]
\newtheorem{proposition}{Proposition}[section]
\newtheorem{lemma}{Lemma}[section]

\newtheorem{definition}{Definition}[section]

\newtheorem{remark}{Remark}[section]
\newtheorem{example}{Example}[section]




%
%
\newcommand {\eq} [1] {\begin{equation}\label{#1}}
\newcommand {\en} {\end{equation}}
%


\newcommand {\cD}       {{\cal D}}


%

%

%

%

%

%

%

%
\newcommand {\R}        {{\mathbb R}}

\newcommand {\Rn}       {\R^n}
\newcommand {\Rm}       {\R^m}

\newcommand {\Rmn}      {\R^{m \times n}}

%

%
\newcommand {\mat}      [1] {\left[\begin{array}{#1}}
\newcommand {\rix}          {\end{array}\right]}

\newcommand {\diag}     {\mathop{\rm diag}\nolimits}

%

\numberwithin{equation}{section}

\def\bsa{{\boldsymbol a}}
\def\bsb{{\boldsymbol b}}
\def\bsc{{\boldsymbol c}}
\def\bsd{{\boldsymbol d}}
\def\bse{{\boldsymbol e}}
\def\bsf{{\boldsymbol f}}
\def\bsg{{\boldsymbol g}}

\def\bsq{{\boldsymbol q}}
\def\bsr{{\boldsymbol r}}

\def\bsu{{\boldsymbol u}}
\def\bsv{{\boldsymbol v}}

\def\bsx{{\boldsymbol x}}
\def\bsy{{\boldsymbol y}}
\def\bsz{{\boldsymbol z}}

\def\vect{{\sf vec}}
\newcommand{\rt}{\top}

\def\Oh{{\mathcal O}}

\def\cD{{\mathcal  D}}

\newcommand{\ccM  }{ {\mathcal M}  }
\newcommand{\ccN  }{ {\mathcal N}  }
\newcommand{\ccT  }{ {\mathcal T}  }
\newcommand{\ccK  }{ {\mathcal K}  }

\newcommand{\BE}{\begin{equation}}
\newcommand{\EE}{\end{equation}}
\makeatletter
\renewcommand{\section}{\@startsection{section}{1}{0mm} {-\baselineskip}{0.5\baselineskip}{\bf\leftline}}
\makeatother

\begin{document}
\title[]{\bf Condition numbers for the truncated total least squares problem and their estimations}

\author[]{Qing-Le Meng\textsuperscript{1}, Huai-An Diao\textsuperscript{2}, Zheng-Jian Bai\textsuperscript{3}}

\thanks{1
School of Mathematical Sciences, Xiamen University, Xiamen 361005, P.R. China. Email: qinglemeng@yahoo.com}
\thanks{2  Corresponding author. 
School of Mathematics and Statistics, Northeast Normal University, No. 5268 Renmin\\
 \indent\,\, Street, Changchun 130024, P.R. China. Email: hadiao@nenu.edu.cn}
 \thanks{3 School of Mathematical Sciences and Fujian Provincial Key Laboratory on Mathematical Modeling \& High Performance Scientific Computing,  Xiamen University, Xiamen 361005, P.R. China.  Email: zjbai@xmu.edu.cn}

\thanks{ The research of Z.-J. Bai is partially supported by the National Natural Science Foundation of China (No. 11671337) and the Fundamental Research Funds for the Central Universities (No. 20720180008).}

\maketitle
\begin{quote}
{\small {\bf Abstract.}
In this paper, we present explicit expressions for the mixed and componentwise condition numbers of the truncated total least squares (TTLS) solution of $A\boldsymbol{x} \approx \boldsymbol{b} $ under the genericity condition, where $A$ is a $m\times n$ real data matrix and $\boldsymbol{b} $ is a real $m$-vector. Moreover,  we reveal that   normwise, componentwise and mixed condition numbers for the TTLS problem can recover the previous corresponding  counterparts for the total least squares (TLS) problem when the truncated level of for the TTLS problem is $n$. When $A$ is a structured matrix,  the structured perturbations for the structured truncated TLS (STTLS) problem are investigated and  the corresponding  explicit expressions for the structured normwise, componentwise and mixed condition numbers for the STTLS problem are obtained. Furthermore, the relationships between the structured and unstructured normwise, componentwise and mixed condition numbers for the STTLS problem are studied.  Based on small sample statistical condition estimation, reliable condition estimation algorithms are proposed for both unstructured and structured normwise, mixed and componentwise cases,  which utilize  the SVD of the augmented matrix $[A~\boldsymbol{b} ]$. The proposed  condition estimation algorithms can be integrated into the SVD-based direct solver for the small and medium size  TTLS problem to give the error estimation for the numerical TTLS solution. Numerical experiments are reported to illustrate the reliability of the proposed condition estimation algorithms.}
\end{quote}
{\small{\bf Keywords:} Truncated total least squares, normwise perturbation, componentwise perturbation, structured perturbation, singular value decomposition, small sample statistical condition estimation.\\
\medskip
{\small{\bf AMS subject classifications:} 15A09, 65F20, 65F35 }
}
\maketitle

\section{Introduction}


In this paper, we consider the following linear model
\BE\label{lm}
A\bsx\approx \bsb,
\EE
where the data matrix $A\in \R^{m\times n}$ and the observation  vector $\bsb\in \R^{m}$ are  both perturbed. When $m>n$, the linear model (\ref{lm}) is overdetermined.  To find a solution to  (\ref{lm}), one may solve  the following minimization problem:
\BE\label{tls}
\begin{array}{cc}
\min & \big\|[\Delta A~\Delta \bsb]\big\|_F \\[2mm]
\mbox{subject to (s.t.)} & (A+\Delta A)\bsx=\bsb+\Delta \bsb,
\end{array}
\EE
where $\|\cdot\|_F$ means the Frobenius matrix norm. This is the  classical total  least square (TLS) problem, which was originally  proposed by Golub and Van Loan \cite{GolubTLS1980}.

The TLS problem is often used for the linear model (\ref{lm}) when the augmented matrix $[A~ \bsb]$ is rank deficient, i.e., the small singular values of $[A~\bsb]$ are assumed to be separated from the others. More interestingly, the truncated total least square (TTLS) method aims to solve the linear model (\ref{lm}) in the sense that the small singular values of  $[A~ \bsb]$ are set to be zeros. For the discussion of the TTLS, one may refer to \cite[\S 3.6.1]{VanHuffelVandewalle1991Book} and \cite{fb94,fgho97}. The TTLS problem  arises in various applications such as linear system theory, computer vision, image reconstruction, system identification, speech and audio processing, modal and spectral analysis,  and astronomy, etc. The overview of the TTLS can be found in \cite{MarkovskyVanHuffel2007}.

Let $k$ be the predefined  truncated level, where $1\leq k\leq n$. The TTLS problem  aims to solve the following problem:
\begin{equation}\label{TLS:definition}
	\bsx_k=\arg\min \|\bsx\|_2, \mbox{ subject to } A_k \bsx=\bsb_k,
\end{equation}
where $\|\cdot\|_2$ denotes the Euclidean vector norm or its induced matrix norm and $[A_k~\, \bsb_k]$ is the best rank-$k$ approximation of $[A~\,\bsb]$ in the  Frobenius norm. The TTLS problem can be viewed as the regularized TLS \eqref{tls} by truncating the small singular values of $[A~\bsb]$ to be zero (cf.\cite{fb94,fgho97}).

In order to solve (\ref{TLS:definition}), we first recall  the singular value decomposition (SVD) of $[A~ \bsb]$ which is given by
\begin{equation}\label{eq:svd}
	[A~\bsb]=U \Sigma V^\top,
\end{equation}
where $U \in \R^{m\times m}$ and $V \in \R^{(n+1) \times (n+1)} $ are orthogonal matrices and $\Sigma$ is a $m\times (n+1)$ real matrix with a vector $[\sigma_1,\ldots,\sigma_p]^\top$ on its diagonal and $\sigma_1\ge\sigma_2\ge\cdots\ge\sigma_p\ge 0$, where $p=\min\{m,n+1\}$. Here, $\diag(\bsx)$ is a diagonal matrix with a vector $\bsx$ on its diagonal and the superscript
``$\cdot^\top$" takes the transpose  of a matrix or vector. Then we have $[A_k~\bsb_k]=U \Sigma_k V^\top$, where $\Sigma_k=\diag([\sigma_1,\ldots,\sigma_k,0,\ldots,0])\in\R^{m \times (n+1)}$. Suppose the truncation level $k < \min \{m, \, n+1\} $ satisfies the condition
 \begin{equation}\label{eq:generic}
 \sigma_k > \sigma_{k+1 }.
 \end{equation}
Define
\begin{equation}\label{eq:vp}
	V=\left[
	\begin{array}{cc}
                V_{11} &  V_{12} \\[2mm]
                V_{21} &V_{22}
                \end{array}
                \right],\quad V_{11}\in \R^{n\times k}.
\end{equation}
If
\begin{equation}\label{eq:ge con}
	V_{22} \neq 0,
\end{equation}
then the TTLS problem is  {\em generic} and the TTLS solution $\bsx_k$ is given by \cite{gtt13max}
\begin{equation}\label{eq:xk}
	\bsx_k = - \frac{1}{\|V_{22}\|_2^2}V_{12} V_{22}^\top.
\end{equation}

Condition numbers measure the worst-case sensitivity of an input data to small perturbations.  {Normwise} condition numbers for the TLS problem \eqref{tls} under the genericity condition were studied in \cite{Baboulin2011SIMAX,JiaLi2013}, where SVD-based explicit formulas for normwise condition numbers were derived.  The normwise condition number of the truncated SVD solution to a linear model as (\ref{lm}) was introduced  in \cite{BergouThe}.  When the data is sparse or badly scaled, it is more suitable to consider the {componentwise perturbations} since normwise perturbations only measure the perturbation for the data by means of norm and may ignore the relative size of the perturbation on its small (or zero) entries (cf. \cite{Higham2002Book}).  There are two types of condition numbers in componentwise perturbations. The mixed condition number measures the errors in the output using norms and the input perturbations componentwise, while the componentwise condition number measures  both the error in the output and the perturbation in the input componentwise (cf. \cite{Gohberg}). The Kronecker product based formulas for the mixed and componentwise condition numbers to the TLS problem \eqref{tls} were derived  in \cite{Zhou,ds18}. 
The corresponding componentwise perturbation analysis for the multidimensional TLS problem and mixed least squares-TLS problem can be found in \cite{Zheng1,Zheng2}.

Gratton et al. in \cite{gtt13max} investigated the normwise condition number for the TTLS problem \eqref{TLS:definition}.   The normwise condition number formula and its computable upper bounds for the TTLS solution \eqref{TLS:definition} were derived (cf. \cite[Theorems 2.4-2.6]{gtt13max}), which rely on the SVD of the augmented matrix $[A~\bsb]$. Since the normwise condition number formula for the TTLS problem \eqref{TLS:definition} involves Kronecker product, which is not easy to compute or evaluate even for the medium size TTLS problem, the condition estimation method based on the power method \cite{Higham2002Book} or the Golub-Kahan-Lanczos (GKL) bidiagonalization algorithm \cite{gk65} was  proposed to estimate the spectral norm of Fr\'echet derivative matrix related to \eqref{TLS:definition}. Furthermore, as point in \cite{gtt13max}, first-order perturbation bounds  based on the normwise condition number can significantly improve the pervious normwise perturbation results in \cite{Fierro1996Perturbation,Wei1992The} for \eqref{TLS:definition}. 

As mentioned before, when the TTLS problem \eqref{TLS:definition} is sparse or badly scaled, which often occurs in scientific computing, the conditioning based on normwise perturbation analysis may severely overestimate the true error of the numerical solution to \eqref{TLS:definition}. Indeed, from the numerical results for Example \ref{example:small} in Section \ref{sec:ex}, the TTLS problem \eqref{TLS:definition}  with respect to the specific data $A$ and $\bsb$ is well-conditioned under componentwise perturbation analysis while it is very ill-conditioned under normwise perturbation, which implies that the normwise relative errors for the numerical solution to \eqref{TLS:definition} are pessimistic. In this paper, we propose the mixed and componentwise condition number for the TTLS problem \eqref{TLS:definition} and the corresponding explicit expressions are derived, which can capture the true conditioning of  \eqref{TLS:definition}   with respect to the sparsity and scaling for the input data. As shown in Example \ref{example:small}, the introduced mixed and componentwise condition numbers for \eqref{TLS:definition} can be much smaller than the normwise condition number appeared in \cite{gtt13max}, which can improve the first-order perturbation bounds for \eqref{TLS:definition} significantly. Furthermore, when the truncated level $k$ in \eqref{TLS:definition} is selected  to be $n$,  \eqref{TLS:definition} reduces to \eqref{tls}. The normwised, mixed and componentwise condition numbers for  the TTLS problem \eqref{TLS:definition} are shown to be mathematically equivalent to the corresponding ones \cite{Baboulin2011SIMAX,JiaLi2013,Zhou} for  the untruncated  case from their explicit expressions.

Structured TLS problems \cite{KammNagy1998,LemmerlingVanHuffel2001,MarkovskyVanHuffel2007} had been studied extensively in the past decades. For structured TLS problems, it is suitable to investigate structured perturbations on the input data, because structure-preserving algorithms that preserve the underlying matrix structure can enhance the accuracy and efficiency of the TLS solution computation.  Structured condition numbers for structured TLS problems can be found in \cite{LiJia2011,ds18,DiaoWeiXie} and references therein. In this paper, we introduce structured perturbation analysis for the structured TTLS (STTLS) problem. The explicit structured normwise, mixed and componentwise condition numbers for the STTLS problem are obtained, and their relationships corresponding to the unstructured ones are investigated.

The Kronecker product based expressions, for both unstructured and structured normwise, mixed and componentwise condition numbers of the TTLS solution in Theorems \ref{th:mixed} and \ref{th:mixed s},  involve higher dimensions and thus prevent the efficient calculations  of these condition numbers. In practice, it is important to estimate condition numbers efficiently since  the forward error for the numerical solution can be obtained  via combining condition numbers with backward errors. In this paper, based on the small sample statistical condition estimation (SCE) \cite{KenneyLaub_SISC94}, we propose reliable condition estimation algorithms for both unstructured and structured normwise, mixed and componentwise condition numbers of the TTLS solution, which utilize the SVD of $[A~\bsb]$ to reduce the computational cost. Furthermore, the proposed condition estimation algorithms can be integrated into the SVD-based direct solver for the small or medium size TTLS problem \eqref{TLS:definition}. Therefore, one can obtain the reliable forward error estimations for the numerical TTLS solution after implementing the proposed condition estimation algorithms. The main computational cost in condition number estimations for \eqref{TLS:definition} is to evaluate the directional  derivatives with respect to the generated direction during the loops in condition number estimations algorithms. We point out that the power method \cite{Higham2002Book} for estimating  the normwise condition number in \cite{gtt13max} needs to evaluate  the directional  derivatives twice in one loop. However,  only evaluating direction derivative once   is needed in the loop of  Algorithms \ref{algo:subnorm} to \ref{algstr}. Therefore, compared with the normwise condition number estimation algorithm proposed in \cite{gtt13max}, our proposed condition number estimations algorithms in this paper are more efficient in terms of the computational complexity, which are also applicable for estimating the componentwise and structured perturbations for \eqref{TLS:definition}. For recent SCE's developments  for (structured) linear systems, linear least squares and TLS problem, we refer to \cite{KenneyLaubReese1998Linear,KenneyLaubReese1998LS,LaubXia2008,DiaoWeiXie} and references therein.





The rest of this paper is organized as follows. In Section \ref{se:pr} we review pervious perturbation  results on  the TTLS problem and derive explicit expressions of the mixed and componentwise condition numbers. The structured normwise, mixed and componentwise condition numbers are also investigated in Section \ref{se:pr}, where the relationships between the unstructured normwise, mixed and componentwise condition numbers for \eqref{TLS:definition} with the corresponding structured counterparts are investigated.  In Section \ref{sect:compar}  we establish the relationship between  normwise, componentwise and mixed condition numbers for the TTLS problem and the corresponding  counterparts for the untruncated TLS. In Section \ref{sec:SCE} we are devoted to  propose  several condition estimation algorithms for the normwise, mixed and componentwise condition numbers of the TTLS problem. Moreover, the structured condition estimation is  considered. In Section \ref{sec:ex},   numerical examples are shown to illustrate the efficiency and reliability of the proposed algorithms and report the perturbation bounds  based on the proposed condition number. Finally, some concluding remarks are drawn in the last section.

\section{Condition numbers for the TTLS problem}\label{se:pr}

In this section we review previous perturbation results on  the TTLS problem.   The explicit expressions of the mixed and componentwise condition numbers for the TTLS problem are derived. Furthermore, for the structured TTLS problem, we propose the normwise, mixed and componentwise condition numbers, where explicit formulas for the   corresponding  counterparts are derived. The relationships between the unstructured normwise, mixed and componentwise condition numbers for \eqref{TLS:definition} with the corresponding structured counterparts are investigated. We first introduce some conventional notations. 

Throughout this paper, we use the following notation. Let $\|\cdot\|_\infty$ be the vector  $\infty$-norm or its induced matrix norm.  Let $I_n$ be  the identity matrix of order $n$. Let $\bse_j$ be the $j$-th column vector of an  identity matrix of an appropriate  dimension. The superscripts ``$\cdot^{-}$"  and``$\cdot^\dag$" mean the  inverse and the Moore-Penrose inverse of a matrix respectively. The symbol ``$\boxdot$" means componentwise multiplication of two conformal dimensional matrices. For any matrix $B=(b_{ij})$, let $|B|=(|b_{ij}|)$, where $|b_{ij}|$ denote the absolute value of $b_{ij}$. For any two matrices $B,C\in\Rmn$, $|B| \leq |C|$ represents $|b_{ij}| \leq |c_{ij}|$ for all $1\le i\le m$ and $1\le j\le n$.   For any $\bsx,\bsy\in\Rn$, we define $\bsz:=\frac{\bsy}{\bsx}$ by
\[ \bsz_i =
\left\{
\begin{array}{ll}
\bsx_i/\bsy_i, & \mbox{if $\bsy_i\neq 0$},\\
0, & \mbox{if $\bsx_i=\bsy_i= 0$},\\
\infty,& \mbox{otherwise}.
\end{array}
\right.
\]
Let $\vect(B)$ be a column vector obtained by stacking the columns of $B$ on top of one another.  For a vector $\bsb\in\R^{mn}$, let $B={{\sf unvec}}(b)\in\Rmn$, where $B_{ij}=\bsb_{i+(j-1)m}$ for $i=1,\ldots, m$ and $j=1,\ldots,n$. The symbol ``$\otimes$" means the Kronecker product and $\Pi_{m,n} \in \R^{mn \times mn}$ is a permutation matrix defined by
\begin{equation}\label{eq:kron 3}
	\vect(B^\top )= \Pi_{m,n} \vect(B), \quad \forall B \in \R^{m \times n}.
\end{equation}
Given the matrices  $X \in \R^{m  \times n}$, $D \in \R^{n  \times p}$,   and $Y \in \R^{p  \times q}$, and $X_1,X_2,Y_1,Y_2$ with appropriate dimensions, we have the following propertes of the Kronecker product and vec operator \cite{Graham1981book}:
\BE\label{eq:kron1}
\left\{
\begin{array}{c}
		\vect(XDY)=(Y^\top \otimes X) \vect(D), \\[2mm]
		(X_1 \otimes X_2) (Y_1 \otimes Y_2)= (X_1 Y_1) \otimes (X_2 Y_2),\\[2mm]
	\Pi_{p,m} (Y\otimes X ) =(X \otimes Y ) \Pi_{n,q}.
\end{array}
\right.
\EE

\subsection{Preliminaries}\label{subsec:preliminary}
In this subsection, we recall the definition of absolute normwise condition number of the TTLS solution $\bsx_k$ defined by (\ref{TLS:definition}) (cf. \cite{gtt13max}). The {\em absolute}  normwise condition number of $\bsx_k$ in (\ref{TLS:definition})  is defined by
\begin{equation}\label{eq:k}
	\kappa(A,\bsb)=\lim_{\epsilon \to 0} \sup_{\| \Delta H \|_F \leq \epsilon} \frac{\left\| \psi_k ([A~ \bsb]+\Delta H )-\psi_k ([A~ \bsb]) \right\|_2  }{\| \Delta H \|_F},
	\end{equation}
where the function  $\psi_k$ is given by
\begin{equation}\label{eq:psi}
	\psi_k([A~\bsb]) \quad : \quad \R^{m\times n} \times \R^m \rightarrow \R^n\quad : \quad [A~\, \bsb]  \mapsto \bsx_k.
\end{equation}
Let the SVD of   $[A~\, \bsb] \in \R^{m\times (n+1)}$ be given by (\ref{eq:svd}). If the truncation level $k$ satisfies the conditions (\ref{eq:generic}) and (\ref{eq:ge con}), then the explicit expression of $\kappa(A,\bsb)$ is given by \cite[Theorem 2.4]{gtt13max}
\begin{equation}\label{eq:k ex}
	\kappa(A,\bsb)=  \| M_k \|_2,
\end{equation}
where
\BE\label{eq:mk}
M_k=\frac{1}{\|V_{22}\|_2^2} [I_n\quad\bsx_k] V K D^{-1} [I_k \otimes \Sigma_2^\top\quad \Sigma_1 \otimes I_{n-k+1 }] W
\EE
with
\begin{align*}
\Sigma_1&=\diag\Big([\sigma_1,\ldots, \sigma_k]\Big) \in \R^{k \times k}, \quad \Sigma_2=\diag\Big([\sigma_{k+1},\ldots, \sigma_p]\Big) \in \R^{(m-k) \times (n-k+1)},  \nonumber \\
\sigma_1 &\geq \cdots \geq \sigma_k > \sigma_{k+1} \geq \cdots\ge \sigma_p \ge 0, \quad k< p=\min \{ m,n+1\}, \nonumber \\
	 K&=\begin{bmatrix}
		(V_{22} \otimes I_k ) \Pi_{n-k+1,k} \\
		V_{21} \otimes I_{n-k+1} 	\end{bmatrix}
,  \quad D=\Sigma_1^2 \otimes I_{n-k+1}-I_k \otimes (\Sigma_2^\top \Sigma_2 ), \nonumber  \\
W&=\begin{bmatrix}
	V_1^\top \otimes U_2^\top \\
	\Pi_{n-k+1,k} (V_2^\top \otimes U_1^\top )
\end{bmatrix}, \quad U=[U_1 \quad U_2],\quad V=[V_1\quad V_2], \nonumber \\
V_1&=\begin{bmatrix}
	V_{11}\\ V_{21}
\end{bmatrix} \in \R^{(n+1) \times k}, V_2 =\begin{bmatrix}
	V_{12} \\ V_{22}
\end{bmatrix} \in \R^{(n+1) \times (n-k+1)},  \quad U_1 \in \R^{m \times k},\quad U_2 \in \R^{m \times (m-k)}.
\end{align*}

Please be noted the the dimension of $M_k$ may be large even for medium size TTLS problems. The explicit formula $\kappa(A,\bsb)$ given by \eqref{eq:k ex} involves the computation of the spectral norm of $M_k$. Hence, upper bounds for $\kappa(A,\bsb)$ is obtain in \cite[\S 2.4]{gtt13max}, which only rely on the singular values of $[A~\, \bsb]$ and $\|\bsx\|_2$. When the data is sparse or badly scaled, the normwise condition number $\kappa(A,\bsb)$ may not reveal the conditioning of \eqref{TLS:definition}, since normwise perturbations ignore the relative size of the perturbation on its small (or zero) entries. Therefore, it is more suitable to consider the componentwise perturbation analysis for \eqref{TLS:definition} when the data is sparse or badly scaled. In the next subsection, we shall introduce the mixed and componentwise condition number for \eqref{TLS:definition}.

In \cite[\S 2.3]{gtt13max}, if both $\bsx_k$ and the full SVD of $[A\,\,\bsb]$ are available, then one may compute $\| M_k \|_2$  by using the power method \cite[Chap. 15]{Higham2002Book}  to $M_k$ or the Golub-Kahan-Lanczos (GKL)  bidiagonalization algorithm \cite{gk65} to $M_k$, where only  the matrix-vector product is needed.  However, as pointed in the introduction part,  the normwise condition number estimation algorithm in \cite{gtt13max} are devised based on the power method \cite{Higham2002Book}, which needs to evaluate the matrix-vector products $M_k \bsf$ and $M_k^\top \bsg$ in one loop for some suitable dimensional  vectors $\bsf$ and $\bsg$. In Section \ref{sec:SCE}, SCE-based condition estimation algorithms for \eqref{TLS:definition} shall be proposed, where in one loop we only need to compute the directional derivative $M_k \bsf$ but the matrix-vector product $M_k^\top \bsg$ is not involved. Therefore, compared with  normwise condition number estimation algorithm in \cite{gtt13max}, SCE-based condition estimation algorithms in Section \ref{sec:SCE}  are more efficient. 

\subsection{Mixed and componentwise condition numbers}\label{subsec:com and mix}
In Lemma \ref{lem:1} below, the first order perturbation expansion of $\psi_k$ with respect to the perturbations of the data $A$ and $\bsb$ is reviewed, which involves the Kronecker product. In order to avoid forming Kronecker product explicitly  in the explicit expression  for  the directional derivative of $\psi_k$, we derive the corresponding  equivalent formula \eqref{eq:dir derivative x} in Lemma  \ref{pro1}. Furthermore,   the directional derivative \eqref{eq:dir derivative x} can be used to save computation memory of SCE-based condition estimation algorithms in Section \ref{sec:SCE}.

%
%
%

\begin{lemma}\cite[Theorem 2.4]{gtt13max}\label{lem:1}
Let the SVD of  the augmented matrix $[A~\bsb] \in \R^{m\times (n+1)}$ be given by (\ref{eq:svd}).
Suppose $k$ is a truncation level such that $V_{22} \neq 0$ and $\sigma_k > \sigma_{k+1}$. If $[\tilde A ~\tilde \bsb  ]= [A~ \bsb]+\Delta H$ with $\|\Delta H \|_F$ sufficiently small, then, for the TTLS solution $\bsx_k$ of $A\bsx \approx \bsb$ and the TTLS solution $\tilde \bsx_{k}$ of $\tilde A \bsx \approx \tilde \bsb$, we have
	$$
	\tilde \bsx_k =\bsx_k + M_k \,\vect(\Delta H)+ \Oh(\|\Delta H \|_F^2).
	$$
	\end{lemma}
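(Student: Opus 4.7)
The plan is to derive the first-order perturbation expansion by combining analytic SVD perturbation theory (which is well defined here because the genericity condition $\sigma_k>\sigma_{k+1}$ gives a gap separating the two singular subspaces) with the explicit formula \eqref{eq:xk} for $\bsx_k$. Since $\bsx_k$ depends only on the block $V_2=[V_{12}^\top\; V_{22}^\top]^\top$ of trailing right singular vectors, it is enough to get a first-order expansion of $V_2$ under the perturbation $[A~\bsb]\mapsto[A~\bsb]+\Delta H$ and then differentiate the rational map $V_2\mapsto -V_{12}V_{22}^\top/\|V_{22}\|_2^2$ at $V_2$.

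First I would write $\tilde V_2=V_1 Q+V_2(I+S)+O(\|\Delta H\|_F^2)$, where $Q\in\R^{k\times(n-k+1)}$ captures the rotation of the trailing right singular subspace into the leading one and $S$ is a skew-symmetric-type correction coming from the reorthonormalization inside the trailing block. The key step is to identify $Q$: plugging this ansatz into the perturbed normal equation $([A~\bsb]+\Delta H)^\top([A~\bsb]+\Delta H)\tilde V_2 =\tilde V_2\tilde \Sigma_2^\top\tilde\Sigma_2$, projecting onto the leading subspace by multiplying with $V_1^\top$, and keeping only first-order terms, yields the Sylvester-type equation
\begin{equation*}
\Sigma_1^2 Q - Q\,\Sigma_2^\top\Sigma_2 \;=\; -\Sigma_1 U_1^\top\Delta H\,V_2 \;-\; V_1^\top(\Delta H)^\top U_2\Sigma_2,
\end{equation*}
which is uniquely solvable thanks to $\sigma_k>\sigma_{k+1}$. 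Vectorizing (using $\vect(XDY)=(Y^\top\otimes X)\vect(D)$) and inverting the Kronecker operator gives
\begin{equation*}
\vect(Q) \;=\; -\,D^{-1}\bigl[I_k\otimes \Sigma_2^\top \;\; \Sigma_1\otimes I_{n-k+1}\bigr]\,W\,\vect(\Delta H),
\end{equation*}
after absorbing the two terms on the right-hand side into a single block using the permutation identity $\vect(B^\top)=\Pi_{m,n}\vect(B)$ and consolidating $U$ and $V$ blocks in the factor $W$. Note that the intra-subspace part $S$ does not contribute to $\tilde\bsx_k$ to first order: the map $V_2\mapsto-V_{12}V_{22}^\top/\|V_{22}\|_2^2$ is right-invariant under $V_2\mapsto V_2 R$ for $R$ near the identity, so only the component of $\Delta V_2$ in the range of $V_1$, namely $V_1 Q$, matters.

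Next I would linearize $\bsx_k$ itself. Writing $V_2=[V_{12}^\top\;V_{22}^\top]^\top$ and using $\bsx_k V_{22}=-V_{12}$ together with $\|V_{22}\|_2^2=1-\|V_{12}\|_2^2$ on the relevant slice, a direct differentiation of \eqref{eq:xk} gives
\begin{equation*}
\Delta\bsx_k \;=\; \frac{1}{\|V_{22}\|_2^2}\bigl[I_n\;\;\bsx_k\bigr]\,V_1 Q\,V_{22}^\top + O(\|\Delta H\|_F^2),
\end{equation*}
because the contributions from $\Delta V_{12}$ and from $\Delta V_{22}$ (through the scalar factor $\|V_{22}\|_2^{-2}$) combine exactly to produce the prefactor $[I_n\;\bsx_k]$. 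Vectorizing this last identity via $\vect(V_1 Q V_{22}^\top)=(V_{22}\otimes V_1)\vect(Q)=V\,K\,\vect(Q)$ (which is the definition of $K$ in \eqref{eq:mk} after using $\Pi_{n-k+1,k}$ to reorder) and substituting the expression for $\vect(Q)$ obtained above yields exactly $\tilde\bsx_k-\bsx_k=M_k\vect(\Delta H)+O(\|\Delta H\|_F^2)$ with $M_k$ as in \eqref{eq:mk}.

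The main obstacle I expect is purely bookkeeping: the Sylvester equation for $Q$ produces two Kronecker terms with different natural orderings, so assembling them into the single block $[I_k\otimes\Sigma_2^\top\;\;\Sigma_1\otimes I_{n-k+1}]W$ requires careful use of $\Pi_{m,n}$ and the identity $\Pi_{p,m}(Y\otimes X)=(X\otimes Y)\Pi_{n,q}$ from \eqref{eq:kron1}. The analytic content (gap condition, Sylvester solvability, irrelevance of the intra-subspace rotation $S$) is standard, but matching the final formula to \eqref{eq:mk} on the nose is where most of the effort goes.
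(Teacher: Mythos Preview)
The paper does not prove this lemma; it is quoted verbatim from \cite[Theorem~2.4]{gtt13max}, so there is no in-paper proof to compare against. Your overall strategy---SVD subspace perturbation for $V_2$, followed by linearization of the rational map $V_2\mapsto -V_{12}V_{22}^\top/\|V_{22}\|_2^2$---is exactly how the result is obtained in \cite{gtt13max}, and your Sylvester equation for the rotation $Q$ is correct.

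However, there is a genuine error in your linearization step. Differentiating $\bsx_k=-V_{12}V_{22}^\top/\|V_{22}\|_2^2$ in the direction $\Delta V_2=V_1Q$ (so $\Delta V_{12}=V_{11}Q$, $\Delta V_{22}=V_{21}Q$) gives three contributions: one from $dV_{12}\cdot V_{22}^\top$, one from $V_{12}\cdot dV_{22}^\top$, and one from $d(\|V_{22}\|_2^{-2})$. These combine to
\[
\Delta\bsx_k \;=\; -\frac{1}{\|V_{22}\|_2^2}\Bigl(V_{11}QV_{22}^\top + V_{12}Q^\top V_{21}^\top + 2\,\bsx_k\,(V_{21}QV_{22}^\top)\Bigr)
\;=\; -\frac{1}{\|V_{22}\|_2^2}\,[I_n\;\bsx_k]\bigl(V_1\,Q\,V_{22}^\top + V_2\,Q^\top V_{21}^\top\bigr),
\]
using that $V_{21}QV_{22}^\top=V_{22}Q^\top V_{21}^\top$ is a scalar. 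Your formula keeps only the first term $V_1QV_{22}^\top$ and drops $V_2Q^\top V_{21}^\top$. This missing piece is precisely what the \emph{second} block row of $K$, namely $V_{21}\otimes I_{n-k+1}$, encodes; so your identification $(V_{22}\otimes V_1)\vect(Q)=VK\vect(Q)$ is incorrect---$K$ has two block rows, not one, and acting with $VK$ on $\vect(Q^\top)$ produces both terms above. Once you include the second term, the bookkeeping you describe (permutations $\Pi$, assembling $W$, solving $D$) goes through and delivers $M_k$ exactly as in \eqref{eq:mk}.
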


\begin{lemma}\label{pro1}
Under the same assumptions as in Lemma \ref{lem:1}, if $[\tilde A ~\tilde \bsb  ]= [A~ \bsb]+[\Delta A~\Delta \bsb]\equiv [A~\bsb]+\Delta H$ with $\|\Delta H \|_F$ sufficiently small, then the directional derivative of $\bsx_k$ at $[A~\bsb]$ in the direction $[\Delta A~ \Delta \bsb]$ is given by
\begin{align}\label{eq:dir derivative x}
 \psi_k' ([A~ \bsb];[\Delta A~ \Delta \bsb]) &= \frac{1}{\|V_{22}\|_2^2}\left(V_{11} \, (Z_1^\top + Z_2)V_{22}^\top +V_{12} \, (Z_1+Z_2^\top)V_{21}^\top  + \bsx_k\sum_{j=1}^4 c_j \right),
\end{align}
where
\begin{align*}
	Z_1&=\left(    \Sigma_2^\top U_2^\top \Delta H V_1  \right)  \boxdot \cD   \in   \R^{(n-k+1)\times k}, \quad Z_2 = \left(\Sigma_1^\top  U_1^\top  \Delta H V_2 \right) \boxdot \cD^\top \in \R^{k\times (n-k+1)},\\
c_1&=V_{21} Z_1^\top V_{22}^\top , \quad c_2 = V_{21}Z_2V_{22}^\top ,\quad c_3=V_{22}Z_1V_{21}^\top,\quad c_4= V_{22} Z_2^\top V_{21}^\top,
	\end{align*}
$\cD=[\cD(:,1),\ldots,\cD(:,k)]\in\R^{(n-k+1)\times k}$ with
\BE\label{eq:dinvi}
\cD(:,i)=\left\{\begin{array}{ll}
		\begin{bmatrix}
	(\sigma_i^2-\sigma_{k+1}^2)^{-1}\\
	\vdots \\
	(\sigma_i^2-\sigma_{m}^2)^{-1}\\
	\sigma_i^{-2} \\
	\vdots \\
	\sigma_i^{-2} \\
\end{bmatrix} \in \R^{(n-k+1)}, \quad \mbox{ if $m< n+1$}, \\[16mm]
\begin{bmatrix}
	(\sigma_i^2-\sigma_{k+1}^2)^{-1} \\
	\vdots \\
	(\sigma_i^2-\sigma_{n+1}^2)^{-1}
\end{bmatrix} \in \R^{(n-k+1)},
\quad \mbox{ if $m\geq  n+1$}.
	\end{array}\right.
\EE

\end{lemma}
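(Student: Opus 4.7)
\textbf{Proof proposal for Lemma \ref{pro1}.}
The plan is to start from Lemma \ref{lem:1}, which already provides
$\psi_k'([A~\bsb];\Delta H)=M_k\,\vect(\Delta H)$, and then to ``unvectorize'' the product on the right by peeling the factors $W,\ [I_k\otimes\Sigma_2^\top\ \ \Sigma_1\otimes I_{n-k+1}],\ D^{-1},\ K,\ V,\ [I_n~\bsx_k]$ off from the inside out. Throughout, the workhorses are the two identities in \eqref{eq:kron1}, namely $\vect(XDY)=(Y^\top\otimes X)\vect(D)$ and the transpose-via-permutation relation $\Pi_{m,n}\vect(B)=\vect(B^\top)$, together with the fact that $\Sigma_1$ is symmetric. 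I abbreviate $P:=U_2^\top\Delta H V_1\in\R^{(m-k)\times k}$ and $Q:=U_1^\top\Delta H V_2\in\R^{k\times(n-k+1)}$, so that $Z_1=(\Sigma_2^\top P)\boxdot\cD$ and $Z_2=(\Sigma_1^\top Q)\boxdot\cD^\top$.

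First, applying $W$ to $\vect(\Delta H)$ gives, after using the vec identity on each block and invoking the $\Pi$-matrix to transpose the second block, the stacked vector $[\vect(P)^\top,\ \vect(Q^\top)^\top]^\top$. Next, the block row $[I_k\otimes\Sigma_2^\top\ \ \Sigma_1\otimes I_{n-k+1}]$ turns the two pieces into $\vect(\Sigma_2^\top P)$ and $\vect(Q^\top\Sigma_1)$, which add to $\vect(\Sigma_2^\top P+Q^\top\Sigma_1)$, a vector of length $k(n-k+1)$ reshaping to an $(n-k+1)\times k$ matrix $T$.

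The decisive step is the $D^{-1}$ block. Viewing $D=\Sigma_1^2\otimes I_{n-k+1}-I_k\otimes\Sigma_2^\top\Sigma_2$ as the Sylvester operator $L(X)=X\Sigma_1^2-\Sigma_2^\top\Sigma_2 X$ on $X\in\R^{(n-k+1)\times k}$, its entries are diagonal, with $(i,j)$-scaling $\sigma_j^2-(\Sigma_2^\top\Sigma_2)_{ii}$, so the inverse acts by the Hadamard product with $\cD$ (the case split in \eqref{eq:dinvi} being exactly the case split between $m<n+1$ and $m\ge n+1$ for the trailing diagonal of $\Sigma_2^\top\Sigma_2$). Hence $D^{-1}\vect(T)=\vect(T\boxdot\cD)$, and splitting $T\boxdot\cD=(\Sigma_2^\top P)\boxdot\cD+(\Sigma_1^\top Q)^\top\boxdot\cD=Z_1+Z_2^\top$ identifies the intermediate quantity in matrix form.

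Finally, $K$ produces two blocks: the top one, after the internal $\Pi_{n-k+1,k}$ transposes $Z_1+Z_2^\top$ and the Kronecker product $V_{22}\otimes I_k$ is unvectorized, equals $(Z_1^\top+Z_2)V_{22}^\top\in\R^k$; the bottom one is $(Z_1+Z_2^\top)V_{21}^\top\in\R^{n-k+1}$. Multiplying this stacked vector by $V$ and then contracting with $[I_n~\bsx_k]$ yields the top $n$-vector $V_{11}(Z_1^\top+Z_2)V_{22}^\top+V_{12}(Z_1+Z_2^\top)V_{21}^\top$ plus $\bsx_k$ times the scalar $V_{21}(Z_1^\top+Z_2)V_{22}^\top+V_{22}(Z_1+Z_2^\top)V_{21}^\top=c_1+c_2+c_3+c_4$; dividing by $\|V_{22}\|_2^2$ gives \eqref{eq:dir derivative x}. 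The main obstacle is simply bookkeeping---correctly tracking which $\Pi_{p,q}$ acts on which reshaped vector so that every ``transpose'' is placed on the right factor, and verifying that the diagonal of $\Sigma_2^\top\Sigma_2$ produces exactly the piecewise definition \eqref{eq:dinvi} of $\cD$ in both shape regimes $m\lessgtr n+1$.
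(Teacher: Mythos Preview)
Your proposal is correct and follows essentially the same route as the paper's own proof: both start from $\psi_k'([A~\bsb];\Delta H)=M_k\vect(\Delta H)$ and successively ``unvectorize'' the factors of $M_k$, identify the action of $D^{-1}$ as a Hadamard product with $\cD$, and then expand $[I_n~\bsx_k]VK$ against the resulting matrix to obtain \eqref{eq:dir derivative x}. The only cosmetic differences are that you peel the factors strictly inside-out and phrase the $D^{-1}$ step via the Sylvester operator $L(X)=X\Sigma_1^2-\Sigma_2^\top\Sigma_2 X$, whereas the paper first combines $[I_k\otimes\Sigma_2^\top\ \ \Sigma_1\otimes I_{n-k+1}]W$ using the commutation relation for $\Pi$ and reads off $D^{-1}$ from its block-diagonal structure; the computations and the resulting eight terms are identical.
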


\begin{proof}
From Lemma \ref{lem:1} we have
	\[
	\psi_k' ([A~ \bsb];[\Delta A~ \Delta \bsb])=M_k \vect(\Delta H),
	\]
	where $M_k$ is defined by \eqref{eq:mk}. Using \eqref{eq:kron1},  it is easy to verify that
	\begin{align}
		[I_k \otimes \Sigma_2^\top\quad\Sigma_1 \otimes I_{n-k+1 }]\, W&= (I_k \otimes \Sigma_2^\top  ) \, (V_1^\top \otimes U_2^\top  )+ (\Sigma_1^\top \otimes I_{n-k+1} ) \, \Pi_{n-k+1,k} (V_2^\top \otimes U_1^\top  ) \notag \\
		&=V_1^\top \otimes(\Sigma_2^\top  U_2^\top  ) + (\Sigma_1^\top  U_1^\top \otimes V_2^\top ) \Pi_{m,n+1}, \label{eq:p2.1 1}
	\end{align}
	 Using the fact that
	$
	\vect(\Delta H)= [\vect(\Delta A)^\top \vect(\Delta \bsb)^\top]^\top
			$
and  \eqref{eq:kron1} we have
\begin{eqnarray}\label{eq:p2.1 2}
&&[I_k \otimes \Sigma_2^\top \quad \Sigma_1 \otimes I_{n-k+1 }] \,W \vect(\Delta H) \nonumber\\
&=&\left(V_1^\top \otimes(\Sigma_2^\top  U_2^\top  ) \right )\vect(\Delta H) + (\Sigma_1^\top  U_1^\top \otimes V_2^\top ) \Pi_{m,n+1} \,\vect(\Delta H) \nonumber \\
		&=&\vect(\Sigma_2^\top U_2^\top \Delta H V_1 )+ \vect(V_2^\top \Delta H^\top U_1 \Sigma_1 ).
\end{eqnarray}
From \eqref{eq:mk}, we see that the $i$-th diagonal block $D^{(i)}$ of $D$ is given by
\BE\label{eq:di}
	D^{(i)}=
	\left\{\begin{array}{ll}
	\diag\Big([\sigma_i^2-\sigma_{k+1}^2,\ldots,\sigma_i^2-\sigma_{m}^2,\sigma_i^2,\ldots,\sigma_i^2]^\top\Big) \in \R^{(n-k+1) \times (n-k+1) }, \quad \mbox{if $m< n+1$}, \\[2mm]
	\diag\Big([\sigma_i^2-\sigma_{k+1}^2,\ldots,\sigma_i^2-\sigma_{n+1}^2]^\top\Big)\in \R^{(n-k+1) \times (n-k+1) },
\quad \mbox{ if $m\geq  n+1$},
	\end{array}\right.
\EE
for $i=1,\ldots,k$.
By the definition of  $\cD \in \R^{(n-k+1) \times k}$ we have
\BE\label{eq:d-1}
\left\{
\begin{array}{c}
D^{-1}  \vect (\Sigma_2^\top U_2^\top \Delta H V_1 )= \vect\left( \left(\Sigma_2^\top U_2^\top \Delta H V_1 \right) \boxdot \cD \right),
\\[2mm]
D^{-1} \vect (V_2^\top \Delta H^\top U_1 \Sigma_1  )= \vect\left( \left(V_2^\top \Delta H^\top U_1 \Sigma_1 \right) \boxdot \cD \right).
\end{array}
\right.
\EE
Then, using the partition of $V$ given by \eqref{eq:vp} we have
\begin{align*}
	&\quad  [I_n ~\bsx_k] V K 	=[I_n~\bsx_k] \begin{bmatrix}
		V_{11}& V_{12} \\ V_{21} & V_{22}	\end{bmatrix} \begin{bmatrix}
		(V_{22} \otimes I_k ) \Pi_{n-k+1,k} \\
		V_{21} \otimes I_{n-k+1} 	\end{bmatrix} \\
		&=V_{11} (V_{22} \otimes I_k) \Pi_{n-k+1,k}+V_{12} (V_{21} \otimes I_{n-k+1})+\bsx_k V_{21} (V_{22} \otimes I_k ) \Pi_{n-k+1,k}+\bsx_k V_{22} (V_{21} \otimes I_{n-k+1}).
\end{align*}
This, together with \eqref{eq:p2.1 2} and \eqref{eq:d-1}, yields
\begin{eqnarray*}
		&& [I_n ~\bsx_k] V K D^{-1} [I_k \otimes \Sigma_2^\top \quad \Sigma_1 \otimes I_{n-k+1 }] W \vect(\Delta H)\\
		&=&\Big(V_{11} (V_{22} \otimes I_k) \Pi_{n-k+1,k}+V_{12} (V_{21} \otimes I_{n-k+1})+\bsx_k V_{21} (V_{22} \otimes I_k ) \Pi_{n-k+1,k}+\bsx_k V_{22} (V_{21} \otimes I_{n-k+1}) \Big)\\
		&&\Big(\vect\left( (\Sigma_2^\top U_2^\top \Delta H V_1 ) \boxdot \cD \right)+ \vect\left( (V_2^\top \Delta H^\top U_1 \Sigma_1 ) \boxdot \cD \right)\Big)  \\
		&=& V_{11} \left( \left( V_1^\top \Delta H^\top U_2 \Sigma_2\right)  \boxdot \cD^\top \right) V_{22}^\top +V_{11}  \left( \left(\Sigma_1^\top  U_1^\top  \Delta H V_2 \right) \boxdot \cD^\top \right) V_{22}^\top\\
		&& +V_{12} \left( \left(    \Sigma_2^\top U_2^\top \Delta H V_1  \right)  \boxdot \cD \right ]   V_{21}^\top + V_{12} \left(\left(    V_2^\top  \Delta H^\top U_1 \Sigma_1   \right)  \boxdot \cD \right) V_{21}^\top\\
		&&+\bsx_k V_{21} \left( \left( V_1^\top \Delta H^\top U_2 \Sigma_2\right)  \boxdot \cD^\top \right) V_{22}^\top +\bsx_k V_{21}  \left( \left(\Sigma_1^\top  U_1^\top  \Delta H V_2 \right) \boxdot \cD^\top \right) V_{22}^\top\\
 &&+ \bsx_k V_{22} \left( \left(    \Sigma_2^\top U_2^\top \Delta H V_1  \right)  \boxdot \cD \right ]   V_{21}^\top+ \bsx_k V_{22} \left(\left(    V_2^\top  \Delta H^\top U_1 \Sigma_1   \right)  \boxdot \cD \right) V_{21}^\top.
	\end{eqnarray*}
This completes the proof. \qed
	\end{proof}

	When the data is sparse or badly-scaled, it is more suitable to adopt the componentwise perturbation analysis to investigate the conditioning of the TTLS problem. In the following definition, we introduce the relative {\em  mixed } and {\em componentwise} condition numbers for the TTLS problem.
	
	\begin{definition}\label{def1}
		Suppose the truncation level $k$ is chosen such that  $V_{22} \neq 0$ and $\sigma_{k}> \sigma_{k+1}$.  The { mixed } and { componentwise} condition numbers for the TTLS problem \eqref{TLS:definition} are defined as follows:
		\begin{align*}
			m(A,\bsb)&=\lim_{\epsilon \to 0} \sup_{ \left | \Delta H \right  | \leq \epsilon\big | [A\,\bsb ] \big |} \frac{\left\| \psi_k ([A~\, \bsb]+\Delta H )-\psi_k ([A~\, \bsb]) \right\|_\infty   }{ \epsilon \|\bsx_k \|_\infty},\\
			c(A,\bsb)&=\lim_{\epsilon \to 0} \sup_{ \left | \Delta H \right  | \leq \epsilon\big  |[A~\, \bsb] \big |} \frac{1}{\epsilon }\left\|  \frac{ \psi_k ([A~\, \bsb]+\Delta H )-\psi_k ([A~\, \bsb])   }{\bsx_k } \right\|_\infty.
		\end{align*}
	\end{definition}
	
	In the following theorem, we give the Kronecker product based explicit expressions of $m(A,\bsb)$ and $c(A,\bsb)$.

	\begin{theorem}\label{th:mixed}
	Suppose the truncation level $k$ is chosen such that  $V_{22} \neq 0$ and $\sigma_{k}> \sigma_{k+1}$.   Then the mixed and componentwise condition numbers $m(A,\bsb)$ and $c(A,\bsb)$ defined in Definition {\rm \ref{def1}}  for the TTLS problem \eqref{TLS:definition} can be characterized by
	\begin{subequations}
		\begin{align}
			m(A,\bsb)&= \frac{\Big\| |M_k| \vect(\,[|A|~|\bsb|]\,) \Big\|_\infty  }{ \|\bsx_k \|_\infty},\label{eq:213a} \\
			c(A,\bsb)&=\left\|  \frac{|M_k| \vect(\,[|A|~|\bsb|]\,)  }{\bsx_k } \right \|_\infty.\label{eq:213b}
		\end{align}
	\end{subequations}
	\end{theorem}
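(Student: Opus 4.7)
The plan is to substitute the first-order perturbation expansion furnished by Lemma~\ref{lem:1} into the two limiting definitions in Definition~\ref{def1} and then reduce the supremum over componentwise-bounded perturbations to a standard $\infty$-norm maximization.

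First I would parametrize the admissible perturbations. Any $\Delta H$ satisfying $|\Delta H|\le \epsilon\,|[A~\bsb]|$ can be written entrywise as $(\Delta H)_{ij}=\epsilon\,d_{ij}\,[A~\bsb]_{ij}$ with $|d_{ij}|\le 1$, so that
\[
\vect(\Delta H)=\epsilon\,\diag\bigl(\vect([A~\bsb])\bigr)\,\bsd,\qquad \|\bsd\|_\infty\le 1.
\]
Inserting this into Lemma~\ref{lem:1} yields
\[
\psi_k([A~\bsb]+\Delta H)-\psi_k([A~\bsb])=\epsilon\,M_k\,\diag\bigl(\vect([A~\bsb])\bigr)\,\bsd+\Oh(\epsilon^2),
\]
with the remainder uniform in $\bsd$ since $\|\Delta H\|_F\le \epsilon\,\|[A~\bsb]\|_F$ independent of $\bsd$.

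Next I would invoke the elementary identity $\sup_{\|\bsd\|_\infty\le 1}\|B\bsd\|_\infty=\bigl\||B|\mathbf{1}\bigr\|_\infty$, valid for any real matrix $B$, together with the observation that $|B\,\diag(\bsw)|\mathbf{1}=|B|\,|\bsw|$ whenever $\diag(\bsw)$ is diagonal. Taking $B=M_k$ and $\bsw=\vect([A~\bsb])$, using $|\vect([A~\bsb])|=\vect([|A|~|\bsb|])$, and then dividing by $\epsilon\,\|\bsx_k\|_\infty$ and letting $\epsilon\to 0$ produces formula~\eqref{eq:213a} for $m(A,\bsb)$. For the componentwise condition number $c(A,\bsb)$ I would repeat the argument with $B=\diag(1/\bsx_k)\,M_k$, invoking the $\bsy/\bsx$ convention introduced before Definition~\ref{def1} to interpret coordinates where $\bsx_k$ has zero entries. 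The resulting supremum becomes $\bigl\|\,\diag(1/|\bsx_k|)\,|M_k|\,\vect([|A|~|\bsb|])\bigr\|_\infty$, which is precisely~\eqref{eq:213b}.

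The main delicate point is justifying that the $\Oh(\epsilon^2)$ Taylor remainder from Lemma~\ref{lem:1} is uniform with respect to $\bsd$ on the unit ball, so that it can be discarded upon dividing by $\epsilon$ and passing to the limit; this follows at once from the uniform Frobenius bound noted above. A secondary subtlety is the treatment of vanishing components of $\bsx_k$ in the componentwise case, but the convention defining $\bsy/\bsx$ is crafted precisely so that the identity $\sup_{\|\bsd\|_\infty\le 1}\|B\bsd\|_\infty=\||B|\mathbf{1}\|_\infty$ continues to match the supremum in Definition~\ref{def1} in all cases, and the only remaining verification is a straightforward case analysis on the zero pattern of $\bsx_k$.
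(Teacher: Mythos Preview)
Your proposal is correct and follows essentially the same route as the paper: both substitute the first-order expansion from Lemma~\ref{lem:1}, rewrite the admissible perturbations through the diagonal matrix $\diag(\vect([A~\bsb]))$, and reduce the supremum over $\|\bsd\|_\infty\le 1$ to $\bigl\||M_k|\vect([|A|~|\bsb|])\bigr\|_\infty$. The only cosmetic difference is that the paper carries out the upper bound and the lower bound separately---bounding above via $\|\,|B|\,\|_\infty$ and then exhibiting a sign-pattern perturbation that attains the maximum in a chosen row---whereas you invoke the identity $\sup_{\|\bsd\|_\infty\le 1}\|B\bsd\|_\infty=\bigl\||B|\mathbf{1}\bigr\|_\infty$ in one stroke; these are the same argument packaged differently.
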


	\begin{proof} Let $\Delta H=[\Delta A~ \, \Delta \bsb]$, where $\Delta A \in \R^{m \times n}$ and $\Delta \bsb\in \R^m $. For any
	$\epsilon>0$, it follows from  $\left | \Delta H \right  | \leq \epsilon\big  |\left [A~\, \bsb \right] \big |$ that
	\[
	|\Delta A | \leq \epsilon |A|\quad\mbox{and}\quad |\Delta \bsb | \leq \epsilon |\bsb|.
	\]
	Define
	\[
	 \Theta_A=\diag(\vect(A))\quad\mbox{and}\quad \Theta_b=\diag(\bsb).
	\]
	By Lemma \ref{lem:1} we have for $\epsilon>0$ sufficiently small,
	\begin{eqnarray}\label{eq:dpsi}
	&&	 \psi_k ([A~\bsb]+\Delta H )-\psi_k ([A~\bsb]) = M_k \, \vect(\Delta H) +\Oh(\|\Delta H\|_F^2) \\
	&=& M_k  \begin{bmatrix} \Theta_A & \\[2mm]  & \Theta_b\end{bmatrix} \begin{bmatrix} \Theta_A^\dag\vect(\Delta A) \nonumber\\[2mm] \Theta_b^\dag\vect(\Delta \bsb)\end{bmatrix}  +\Oh(\|[\Delta A~\Delta \bsb]\|_F^2),
	\end{eqnarray}
and taking infinity  norms we have
	\begin{eqnarray*}
		\left\| \psi_k ([A~\, \bsb]+\Delta H )-\psi_k ([A~\, \bsb]) \right\|_\infty &=&\left\| M_k \begin{bmatrix} \Theta_A & \\[2mm]  & \Theta_b\end{bmatrix} \begin{bmatrix} \Theta_A^\dag\vect(\Delta A) \\[2mm] \Theta_b^\dag\vect(\Delta \bsb)\end{bmatrix}  \right\|_\infty +\Oh(\|[\Delta A~\Delta \bsb]\|_F^2)\\
		&\leq& \epsilon\Big\| \big| M_k \big|  \begin{bmatrix} |\Theta_A| & \\[2mm]  & |\Theta_b|\end{bmatrix} \Big\|_\infty +\Oh(\epsilon^2),
	\end{eqnarray*}
	where  the fact that $\Oh(\|[\Delta A~\Delta \bsb]\|_F^2)\le \Oh(\epsilon^2)$ is used.
	Thus,
	\begin{eqnarray*}
		m(A,\bsb)&\le &\frac{\left\| |M_k|  \begin{bmatrix} |\Theta_A| & \\[2mm]  & |\Theta_b|\end{bmatrix} \right\|_\infty   }{ \|\bsx_k \|_\infty}=\frac{\left\| |M_k| \begin{bmatrix} |\Theta_A| & \\[2mm]  & |\Theta_b|\end{bmatrix} {\bf 1}_{mn+m}\right\|_\infty  }{ \|\bsx_k \|_\infty}\\
		&=&\frac{\left\| |M_k| \begin{bmatrix}
			\vect(|A|)\\
			|\bsb|
		\end{bmatrix}\right\|_\infty  }{ \|\bsx_k \|_\infty}
		= \frac{\bigg\| |M_k| \vect\left([|A|~|\bsb|]\right) \bigg\|_\infty  }{ \|\bsx_k \|_\infty},
	\end{eqnarray*}
	where  $ {\bf 1}_{mn+m}=[1,\ldots,1]^{\top} \in \R^{mn+m}$.
	
    On the other hand, let the index $a$ is such that
    \[
    \big\| |M_k| \vect\left([|A|~|\bsb|]\right) \big\|_\infty = |M_k(a,:)| \vect([|A|~|\bsb|]),
    \]
    where $|M_k(a,:)|$ denotes the $a$-th row of $|M_k|$.
    We choose
    \[
    \vect(\Delta H) = \epsilon\, \Theta\;  \vect([|A|\,\,|\bsb|]),
\]
    where $\Theta\in\R^{mn\times mn}$ is a diagonal matrix such that $\theta_{jj}$=sign$((M_k)_{aj})$ for $j=1,2,\ldots,m(n+1)$.  Using \eqref{eq:dpsi} we have
\begin{eqnarray*}
    m(A,\bsb) &\ge&   \lim_{\epsilon \to 0}  \frac{\left\| \epsilon M_k \Theta\;  \vect([|A|\,\,|\bsb|]) +\Oh(\epsilon \|\vect([|A|\,\,|\bsb|])\|_2^2)\right\|_\infty   }{ \epsilon \|\bsx_k \|_\infty}\\
    &=&   \frac{\left\| M_k \Theta\;  \vect([|A|\,\,|\bsb|]) \right\|_\infty   }{\|\bsx_k \|_\infty}\\
    &=&  \frac{\left\| |M_k| \vect([|A|\,\,|\bsb|]) \right\|_\infty  }{ \|\bsx_k \|_\infty}.
    \end{eqnarray*}
Therefore, we derive \eqref{eq:213a}. One can use the similar argument to obtain \eqref{eq:213b}.
\qed
	\end{proof}
	
	\begin{remark}\label{remark:1}
		Based on \eqref{eq:k} and  \eqref{eq:k ex}, the relative normwise condition number for the TTLS problem \eqref{TLS:definition} can be defined and has the following expression
		\begin{equation}\label{eq:k rel}
			\kappa^{\rm rel}  (A,\bsb)=\lim_{\epsilon \to 0} \sup_{\| \Delta H \|_F \leq \epsilon \big\|[A\,\, \bsb]\big\|_F} \frac{\left\| \psi_k ([A\,\, \bsb]+\Delta H )-\psi_k ([A\,\, \bsb]) \right\|_2  }{\epsilon \|\bsx_k\|_2}=\frac{ \|M_k\|_2~ \| [A\,\,\bsb]\|_F }{\|\bsx_k\|_2 }.
		\end{equation}
	\em Using the fact that
        	$$
       \left\{\begin{array}{ll}
       \big\|\,|M_k|\,\big\|_2=\|M_k\|_2,\\[2mm]
	\| M_k\|_\infty \leq \sqrt{m(n+1)}\,\| M_k\|_2, \\[2mm]
\|\bsx_k\|_2 \leq \sqrt n  \|\bsx_k\|_\infty,\\[2mm]
\big\|\vect([A~ \bsb])\big\|_\infty \leq   \big\| [A~ \bsb]\big\|_F,
	\end{array}\right.
$$
   it is easy to see that
		\begin{equation}\label{eq:realtion}
		m(A,\bsb) \leq \sqrt{(n+1)nm} ~ \kappa^{\rm rel}  (A,\bsb).
		\end{equation}
From Example \ref{example:small}, we can see $m(A,\bsb) $ and $c(A,\bsb) $ can be much smaller than $\kappa^{\rm rel}  (A,\bsb)$ when the data is sparse and badly scaled. Therefore, one should adopt the mixed and componentwise condition number to measure the conditioning of \eqref{TLS:definition} instead of the normwise condition number when $[A~\bsb]$ is spare or badly scaled. However, since the explicit expressions of $m(A,\bsb) $ and $c(A,\bsb) $ are based on Kronecker product, which involves large dimensional computer memory to form them explicitly even for medium size TLS problems, it is necessary to propose efficient and reliable  condition estimations for $m(A,\bsb)$ and $c(A,\bsb)$, which will be investigated in Section \ref{sec:SCE}.
	\end{remark}


In  \cite{BeckSIAM2005,KammNagy1998,LemmerlingVanHuffel2001}, the structured TLS (STTLS)  problem has been studied extensively. Hence, it is interesting to study the structured perturbation analysis for the STTLS  problem.   In the following, we propose the structured normwise, mixed and componentwise condition numbers for  the STTLS  problem, where $A$ is a linear structured data matrix. Assume that ${\mathcal S} \subset \R^{m\times n}$ is a linear subspace which consists of a class of basis matrices. Suppose there are $t$ ($t\leq  mn$) linearly independent matrices $S_1,\ldots,S_t$ in $\mathcal S$, where $S_i$ are matrices of constants, typically 0's and 1's. For any $ A \in \mathcal S$, there is a uniques vector $\bsa=[a_1,\ldots,a_t]^\top\in\R^t$  such that
\begin{equation}\label{eq:A linear}
	A=\sum_{i=1}^t a_i S_i.
\end{equation}
In the following, we study the sensitivity of the STTLS solution $\bsx_k$ to perturbations on the data $\bsa$ and $\bsb$, which is defined by
\begin{align}\label{eq:g dfn s}
\psi_{s,k} (\bsa,\, \bsb) \quad : \quad \R^{t} \times
\R^m  \rightarrow \R^n  \quad
: \quad (\bsa,\, \bsb)  \mapsto \bsx_k ,
\end{align}
where $\bsx_k$ is the unique solution to the STTLS problem \eqref{TLS:definition} and   \eqref{eq:A linear}.
\begin{definition}\label{def2}
		Suppose the truncation level $k$ is chosen such that  $V_{22} \neq 0$ and $\sigma_{k}> \sigma_{k+1}$.   The absolute structured normwise, mixed  and  componentwise condition number for the STTLS problem \eqref{TLS:definition} and   \eqref{eq:A linear} are defined as follows:
		\begin{align*}
		\kappa_s(\bsa,\bsb)&=\lim_{\epsilon \to 0} \sup_{ \left\| \begin{bmatrix} \Delta \bsa\\  \Delta \bsb \end{bmatrix} \right\|_2  \leq ~ \epsilon }  \frac{\left\| \psi_{s,k} ((\bsa,\, \bsb)+(\Delta \bsa,\, \Delta \bsb) )-\psi_{s,k} (\bsa,\, \bsb) \right\|_2   }{ \left\| \begin{bmatrix} \Delta \bsa\\  \Delta \bsb \end{bmatrix} \right\|_2 },\\
			m_s(\bsa,\bsb)&=\lim_{\epsilon \to 0} \sup_{ |\Delta \bsa| \leq \epsilon |\bsa|\atop \left | \Delta \bsb\right  | \leq \epsilon\left  | \bsb  \right |} \frac{\left\| \psi_{s,k} ((\bsa,\, \bsb)+(\Delta \bsa,\, \Delta \bsb) )-\psi_{s,k} (\bsa,\, \bsb) \right\|_\infty   }{ \epsilon \|\bsx_k \|_\infty},\\
			c_s(\bsa,\bsb)&=\lim_{\epsilon \to 0} \sup_{ |\Delta \bsa| \leq \epsilon |\bsa|\,\atop \left | \Delta \bsb\right  | \leq \epsilon\left  | \bsb  \right | } \frac{1}{\epsilon }\left\|  \frac{ \psi_{s,k} ((\bsa,\, \bsb)+(\Delta \bsa,\, \Delta \bsb) )-\psi_{s,k}  (\bsa,\, \bsb)   }{\bsx_k } \right\|_\infty .
		\end{align*}
	\end{definition}

In the following lemma, we provide the first order expansion of the STTLS solution $\bsx_k$ with respect to the structured perturbations $\Delta \bsa$ on $\bsa$ and $\Delta \bsb$ on $\bsb$, which help us to derive the structured condition number expressions for the STTLS problem \eqref{TLS:definition} and  \eqref{eq:A linear}. In view of the fact that $\vect(\Delta A)= \sum_{i=1}^t \Delta a_i  \vect(S_i) $, we can prove the following lemma from Lemma  \ref{lem:1}. The detailed proof is omitted here.

\begin{lemma}\label{lem:2}
	Under  the same assumptions of Lemma \ref{lem:1}, if $[\tilde A~ \tilde \bsb  ]= [A~ \bsb]+\big[\sum_{i=1}^{t} \Delta a_i S_i ~\,\Delta \bsb\big]$ with $\big\|[\Delta \bsa^\top~\Delta \bsb^\top]^\top\big\|_2$ sufficiently small, then, for the STTLS solution $\bsx_k$ of $A\bsx \approx \bsb$ and the STTLS solution $\tilde \bsx_{k}$ of $\tilde A \bsx \approx \tilde \bsb$, we have
	$$
	\tilde \bsx_k =\bsx_k + M_k \begin{bmatrix}
		\ccM & 0\\
		0& I_m
	\end{bmatrix} \begin{bmatrix}
		\Delta \bsa\\
		\Delta \bsb
	\end{bmatrix}+ \Oh\left(\left\| \begin{bmatrix} \Delta \bsa\\  \Delta \bsb \end{bmatrix} \right\|_2^2 \right),
	$$
	where $\ccM  =\left[ \vect (S_1), \ldots, \vect(S_t)\right ] \in \R^{mn \times t}$.
	\end{lemma}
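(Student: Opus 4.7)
The proof is essentially a routine application of Lemma \ref{lem:1} combined with the linear parametrization \eqref{eq:A linear}. First I would take $\Delta A = \sum_{i=1}^t \Delta a_i S_i$ and $\Delta H = [\Delta A~\Delta \bsb]$, so that the perturbed augmented matrix $[\tilde A~\tilde\bsb]$ fits the setting of Lemma \ref{lem:1}. I would then need to verify that the smallness hypothesis $\|\Delta H\|_F$ sufficiently small follows from $\|[\Delta\bsa^\top~\Delta\bsb^\top]^\top\|_2$ being sufficiently small; this is the norm equivalence
\[
\|\Delta H\|_F^2 = \Big\|\sum_{i=1}^t \Delta a_i S_i\Big\|_F^2 + \|\Delta \bsb\|_2^2 \le \bigl(\|\ccM\|_2^2+1\bigr)\left\|\begin{bmatrix}\Delta\bsa\\ \Delta\bsb\end{bmatrix}\right\|_2^2,
\]
using $\vect(\Delta A)=\ccM\,\Delta\bsa$ and the definition of $\ccM$.

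Having secured this, Lemma \ref{lem:1} gives
\[
\tilde\bsx_k = \bsx_k + M_k\,\vect(\Delta H) + \Oh(\|\Delta H\|_F^2).
\]
The next step is to rewrite $\vect(\Delta H)$ in terms of the structured parameters. By the block structure of $\Delta H$,
\[
\vect(\Delta H) = \begin{bmatrix}\vect(\Delta A)\\ \Delta\bsb\end{bmatrix},
\]
and the identity $\vect(\Delta A)=\sum_{i=1}^t \Delta a_i\,\vect(S_i)=\ccM\,\Delta\bsa$ yields
\[
\vect(\Delta H) = \begin{bmatrix}\ccM & 0\\ 0 & I_m\end{bmatrix}\begin{bmatrix}\Delta\bsa\\ \Delta\bsb\end{bmatrix}.
\]

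Finally, I would convert the remainder term: since $\|\Delta H\|_F^2 = \Oh\bigl(\|[\Delta\bsa^\top~\Delta\bsb^\top]^\top\|_2^2\bigr)$ by the norm-equivalence inequality above, the $\Oh(\|\Delta H\|_F^2)$ error is absorbed into $\Oh\bigl(\|[\Delta\bsa^\top~\Delta\bsb^\top]^\top\|_2^2\bigr)$, producing the claimed first-order expansion. There is no genuine obstacle here — the entire argument is a direct substitution, and the only minor care required is the norm-equivalence step that justifies the hypothesis transfer from Lemma \ref{lem:1} and the corresponding rewriting of the remainder term; this is precisely why the authors omit the detailed proof.
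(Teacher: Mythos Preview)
Your proposal is correct and follows exactly the approach the paper indicates: apply Lemma~\ref{lem:1} with $\Delta H=[\Delta A~\Delta\bsb]$ and use $\vect(\Delta A)=\sum_{i=1}^t \Delta a_i\,\vect(S_i)=\ccM\,\Delta\bsa$ to rewrite $\vect(\Delta H)$ in block form. The paper omits the details entirely, so your explicit handling of the norm-equivalence step (for both the hypothesis transfer and the remainder conversion) is a welcome elaboration rather than a deviation.
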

	
	
 The following theorem concerns with the explicit expressions for  the structured normwise,  mixed and componentwise condition numbers $\kappa_s(\bsa,\bsb)$, $m_s(\bsa,\bsb)$,  and $c_s(\bsa,\bsb)$ defined in Definition {\rm \ref{def2}} when $A$ can be expressed by \eqref{eq:A linear}. Since the proof is similar to  Theorem \ref{th:mixed}, we omit it here.

\begin{theorem}\label{th:mixed s}
Suppose the truncation level $k$ is chosen such that $V_{22} \neq 0$ and $\sigma_{k}> \sigma_{k+1}$. The absolute structured normwise, mixed and componentwise condition numbers $\kappa_s(\bsa,\bsb)$, $m_s(\bsa,\bsb)$, and $c_s(\bsa,\bsb)$ defined in Definition {\rm \ref{def2}} for the STTLS problem \eqref{TLS:definition} and \eqref{eq:A linear} can be characterized by
\[
		\kappa_s(\bsa,\bsb)= \left\| M_k \begin{bmatrix}
		\ccM & 0\\
		0& I_m
	\end{bmatrix} \right \|_2,
			m_s(\bsa,\bsb)= \frac{\left\| ~\left|M_k \begin{bmatrix}
		\ccM & 0\\
		0& I_m
	\end{bmatrix} \right| \begin{bmatrix}
			|\bsa| \\
			|\bsb|
		\end{bmatrix}  \right\|_\infty   }{ \|\bsx_k \|_\infty},
			c_s(\bsa,\bsb)=\left\|  \frac{~\left|M_k \begin{bmatrix}
		\ccM & 0\\
		0& I_m
	\end{bmatrix} \right|  \begin{bmatrix}
			|\bsa|\\
			|\bsb|
		\end{bmatrix}  }{\bsx_k } \right \|_\infty.
\]
	\end{theorem}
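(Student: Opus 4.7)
The plan is to parallel the argument of Theorem \ref{th:mixed} but with the first-order expansion of $\bsx_k$ replaced by the structured one in Lemma \ref{lem:2}. Set
\[
N_k \;:=\; M_k \begin{bmatrix} \ccM & 0 \\ 0 & I_m \end{bmatrix}\in\R^{n\times(t+m)},
\]
so that Lemma \ref{lem:2} gives $\tilde\bsx_k-\bsx_k = N_k \bigl[\Delta\bsa^\top\;\Delta\bsb^\top\bigr]^\top + \Oh(\|[\Delta\bsa^\top\;\Delta\bsb^\top]\|_2^2)$. Each condition number is then just an operator norm of (a scaled version of) $N_k$.

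For $\kappa_s(\bsa,\bsb)$, I would plug the expansion into the definition, take $\|\cdot\|_2$, use the standard fact that $\sup_{\|\bsy\|_2\le \epsilon}\|N_k\bsy\|_2/\epsilon = \|N_k\|_2$, and absorb the quadratic remainder into the limit $\epsilon\to 0$. This directly yields $\kappa_s(\bsa,\bsb)=\|N_k\|_2$.

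For $m_s(\bsa,\bsb)$ and $c_s(\bsa,\bsb)$, the trick is exactly the one used in Theorem \ref{th:mixed}: introduce the diagonal scalings $\Theta_{\bsa}=\diag(\bsa)$ and $\Theta_{\bsb}=\diag(\bsb)$ and write
\[
N_k\begin{bmatrix}\Delta\bsa\\ \Delta\bsb\end{bmatrix}
= N_k\begin{bmatrix}\Theta_{\bsa} & \\ & \Theta_{\bsb}\end{bmatrix}\begin{bmatrix}\Theta_{\bsa}^\dag\Delta\bsa\\ \Theta_{\bsb}^\dag\Delta\bsb\end{bmatrix}.
\]
The constraints $|\Delta\bsa|\le\epsilon|\bsa|$ and $|\Delta\bsb|\le\epsilon|\bsb|$ force each component of the second factor to have modulus at most $\epsilon$, so taking $\|\cdot\|_\infty$ and then dividing by $\epsilon\|\bsx_k\|_\infty$ (respectively by the componentwise scaling $\bsx_k$) gives the upper bound
\[
m_s(\bsa,\bsb) \le \frac{\bigl\| |N_k|\,[|\bsa|^\top\;|\bsb|^\top]^\top\bigr\|_\infty}{\|\bsx_k\|_\infty},
\]
using $|N_k|\bigl[\!\!\begin{smallmatrix}|\Theta_{\bsa}|\\ & |\Theta_{\bsb}|\end{smallmatrix}\!\!\bigr]\mathbf{1}=|N_k|\bigl[\!\!\begin{smallmatrix}|\bsa|\\ |\bsb|\end{smallmatrix}\!\!\bigr]$, and similarly for $c_s$. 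Attainability follows by choosing, as in Theorem \ref{th:mixed}, the perturbation $[\Delta\bsa^\top\;\Delta\bsb^\top]^\top=\epsilon\,\Theta\,[|\bsa|^\top\;|\bsb|^\top]^\top$, where $\Theta$ is the diagonal sign matrix matching the row of $N_k$ at which the $\ell_\infty$ norm of $|N_k|[|\bsa|^\top\;|\bsb|^\top]^\top$ is attained; for $c_s$ one instead picks the row according to the maximum of the componentwise ratio by $\bsx_k$.

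The main (and essentially only) obstacle is bookkeeping: one must check that the quadratic remainder in Lemma \ref{lem:2} is $\Oh(\epsilon^2)$ uniformly in the admissible perturbations (immediate because $\|[\Delta\bsa^\top\;\Delta\bsb^\top]\|_2\le\epsilon\sqrt{\||\bsa|\|_2^2+\||\bsb|\|_2^2}$), and that the sign-choice construction achieving the upper bound remains admissible under the structured constraint $A+\Delta A=\sum_i(a_i+\Delta a_i)S_i$. The latter is automatic since we prescribe $\Delta\bsa$ directly, and the structure is built into the map $\Delta\bsa\mapsto\Delta A=\sum_i\Delta a_i S_i$ absorbed inside $N_k$. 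Once these points are verified, matching the upper and lower bounds delivers the three claimed formulas.
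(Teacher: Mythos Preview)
Your proposal is correct and follows precisely the approach the paper indicates: the paper omits the proof, stating only that it is similar to Theorem \ref{th:mixed}, and your argument is exactly that parallel, replacing $M_k$ by $N_k=M_k\bigl[\begin{smallmatrix}\ccM & 0\\ 0 & I_m\end{smallmatrix}\bigr]$ and the data $(A,\bsb)$ by $(\bsa,\bsb)$ throughout. The bookkeeping points you flag (uniform $\Oh(\epsilon^2)$ remainder and admissibility of the sign-choice perturbation under the structured constraint) are handled correctly.
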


\begin{remark}\label{remark:2}
		Based on Definition \ref{def2} and Theorem \ref{th:mixed s}, the relative normwise condition number for the STTLS problem  \eqref{TLS:definition} and   \eqref{eq:A linear}  can be defined and has the following expression
		\begin{align}\label{eq:k-srel}
			\kappa_{s}^{\rm rel}  (\bsa,\bsb)&=\lim_{\epsilon \to 0} \sup_{\left\| \begin{bmatrix} \Delta \bsa\\  \Delta \bsb \end{bmatrix} \right\|_2 \leq~\epsilon~\left\| \begin{bmatrix}  \bsa\\  \bsb \end{bmatrix} \right\|_2} \frac{\left\| \psi_{s,k} ((\bsa,\, \bsb)+(\Delta \bsa,\, \Delta\bsb) )-\psi_{s,k} (\bsa,\,\bsb) \right\|_2   }{ \epsilon \|\bsx_k\|_2 } \nonumber\\
&= \frac{\left\| M_k \begin{bmatrix}
		\ccM & 0\\
		0& I_m
	\end{bmatrix} \right \|_2 \left\| \begin{bmatrix}  \bsa\\ \bsb \end{bmatrix} \right\|_2}{\|\bsx_k\|_2}.
		\end{align}
	Similar to \eqref {eq:realtion}, we have
		\begin{equation}\label{eq:relation2}
		m_s(\bsa,\bsb) \leq \sqrt{(t+m)n} ~ \kappa_s^{\rm rel}  (\bsa,\bsb).
		\end{equation}
		In Example \ref{example:toeplitzk6}, we can see $m_s(A,\bsb)$ can be much smaller than $\kappa_s^{\rm rel}  (A,\bsb)$. Hence, structured condition number can explain that structure-preserving algorithms  can enhance the accuracy of the numerical solution, since  structure-preserving algorithms preserve the underlying matrix structure. 
	\end{remark}

In the following proposition, we show that, when $A$ is a linear structured matrix defined by \eqref{eq:A linear}, the  structured normwise,  mixed and componentwise condition numbers $\kappa_s(\bsa,\bsb)$, $m_s(\bsa,\bsb)$,  and $c_s(\bsa,\bsb)$ are smaller than the corresponding unstructured condition numbers $\kappa(A,\bsb)$, $m(A,\bsb)$ and $c(A,\bsb)$ respectively.

\begin{proposition}\label{pro:relation of the unstru and the stru}
Using the notations above, we have $\kappa_s(\bsa,\bsb) \leq \kappa(A,\bsb)$. Moreover,  if $|A|=\sum_{i=1}^t |a_i| |S_i|$, then we have
\begin{align*}
	m_s(\bsa,\bsb)\leq m(A,\bsb),\quad c_s(\bsa,\bsb)\leq c(A,\bsb).
\end{align*}	
\end{proposition}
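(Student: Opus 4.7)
The plan is to leverage the Kronecker-product-based formulas of Theorems \ref{th:mixed} and \ref{th:mixed s}, which reduce the proposition to norm/componentwise comparisons between $M_k$ and $M_k N$, where
\[
N:=\begin{bmatrix} \ccM & 0 \\ 0 & I_m \end{bmatrix}\in\R^{(mn+m)\times(t+m)}.
\]

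For the normwise inequality $\kappa_s(\bsa,\bsb)\le\kappa(A,\bsb)$, I would apply sub-multiplicativity of the spectral norm,
\[
\kappa_s(\bsa,\bsb)=\|M_k N\|_2\le\|M_k\|_2\,\|N\|_2=\kappa(A,\bsb)\,\|N\|_2,
\]
and then bound $\|N\|_2$ via its block-diagonal structure, $\|N\|_2=\max\{\|\ccM\|_2,1\}$. The inequality $\|\ccM\|_2\le 1$ is the delicate point; it reflects a normalization of the basis $\{S_i\}$ and is the one step I would verify carefully for the concrete structures (e.g.\ orthonormal or appropriately scaled $S_i$) considered in the sequel.

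For the mixed case, the assumption $|A|=\sum_{i=1}^{t}|a_i|\,|S_i|$ is exactly the no-cancellation hypothesis that turns the entrywise triangle inequality into an equality at the \textsf{vec} level:
\[
|\vect(A)|=|\ccM|\,|\bsa|,\qquad \vect\bigl([\,|A|~|\bsb|\,]\bigr)=|N|\begin{bmatrix}|\bsa|\\|\bsb|\end{bmatrix}.
\]
Combining this with the standard entrywise bound $|M_k N|\le|M_k|\,|N|$ produces
\[
|M_k N|\begin{bmatrix}|\bsa|\\|\bsb|\end{bmatrix}\le|M_k|\,|N|\begin{bmatrix}|\bsa|\\|\bsb|\end{bmatrix}=|M_k|\,\vect\bigl([\,|A|~|\bsb|\,]\bigr),
\]
so taking $\infty$-norms on both sides and dividing by $\|\bsx_k\|_\infty$ immediately yields $m_s(\bsa,\bsb)\le m(A,\bsb)$. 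The componentwise statement $c_s(\bsa,\bsb)\le c(A,\bsb)$ then follows by dividing the same pointwise inequality entrywise by $\bsx_k$ first and then taking the $\infty$-norm; since both sides of the pointwise inequality are non-negative vectors, the paper's division convention preserves the inequality (on zero components both sides are $0$ or both are $\infty$ in the appropriate order).

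The main obstacle I anticipate is the normwise case: the mixed and componentwise statements reduce essentially to a two-line application of the triangle inequality once the vec-form of $|A|=\sum|a_i||S_i|$ is isolated, whereas $\kappa_s\le\kappa$ genuinely hinges on the spectral-norm bound $\|\ccM\|_2\le 1$, which may need an explicit scaling convention on the structural basis $\{S_i\}$ in order to hold in full generality.
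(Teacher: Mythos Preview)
Your argument for the mixed and componentwise inequalities is exactly the paper's: use $|M_kN|\le|M_k|\,|N|$, then observe that the hypothesis $|A|=\sum_i|a_i|\,|S_i|$ is precisely $|\ccM|\,|\bsa|=\vect(|A|)$, so $|N|\begin{bmatrix}|\bsa|\\|\bsb|\end{bmatrix}=\vect([\,|A|~|\bsb|\,])$, and finish by monotonicity of the $\infty$-norm.

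For the normwise inequality you have correctly isolated the crux, $\|\ccM\|_2\le 1$, but you leave it as something to be checked case by case. The paper does not argue this structurally either; it simply invokes \cite[Theorem 4.1]{LiJia2011}, which states that $\ccM$ is column orthogonal, whence $\|\ccM\|_2=1$ and therefore $\|N\|_2=\max\{\|\ccM\|_2,1\}=1$. With that single citation your sub-multiplicativity argument and the paper's proof coincide; without it, your normwise proof is incomplete.
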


\begin{proof}
From \cite[Theorem 4.1]{LiJia2011}, the matrix $\ccM$ is column orthogonal. Hence, $\|\ccM\|_2=1$ and it is not difficult to see that
$\kappa_s(\bsa,\bsb) \leq \kappa(A,\bsb)$ by comparing their expressions. Using the monotonicity of infinity norm, it can be obtained that
	\begin{align*}
		\left\| ~\left|M_k \begin{bmatrix}
		\ccM & 0\\
		0& I_m
	\end{bmatrix} \right| \begin{bmatrix}
			|a| \\
			|b|
		\end{bmatrix}  \right\|_\infty \leq \left\| ~| M_k | ~\begin{bmatrix}
		|\ccM| & 0\\
		0& I_m
	\end{bmatrix}  \begin{bmatrix}
			|\bsa| \\
			|\bsb|
		\end{bmatrix}  \right\|_\infty = \left\| ~| M_k |   \begin{bmatrix}
			\vect(|A|)  \\
			|\bsb|
		\end{bmatrix}  \right\|_\infty,
	\end{align*}
	therefore we prove that $m_s(\bsa,\bsb)\leq m(A,\bsb)$ and $c_s(\bsa,\bsb)\leq c(A,\bsb)$ can be proved similarly. \qed
\end{proof}



\section{\bf Revisiting condition numbers of the untruncated TLS problem}\label{sect:compar}


In this section,  we investigate the relationship between  normwise, componentwise and mixed condition numbers for the TTLS problem and the previous corresponding  counterparts for the untruncated TLS.  In the following, let $\bsx_n$ be the untruncated TLS solution to \eqref{tls}. First let us review previous results on condition numbers for the untruncated TLS  problem. 

Let $\widetilde{\sigma}_n$ be the smallest singular value of $A$. As noted in  \cite{GolubTLS1980},  if
\begin{equation}\label{eq:genericity}
\widetilde{\sigma}_n>\sigma_{n+1},
\end{equation}
then the TLS problem \eqref{tls} has  a unique TLS solution
\[
\bsx_n = (A^\top A- \sigma_{n+1}^2I_n)^{-1}A^\top \bsb.
\]

Let $L^\top \bsx_n$ be a linear function of the TLS solution $\bsx_n$, where $L\in \R^{n \times l}$ is a fixed matrix with $l \leq n$.
We define the mapping
\begin{equation}\label{eq:h}
	h \quad : \quad \R^{m\times n} \times \R^m \rightarrow \R^l\quad : \quad (A,\, \bsb)  \mapsto L^\top \bsx_n=L^\top(A^\top A- \sigma_{n+1}^2I_n)^{-1}A^\top \bsb.
\end{equation}
As in \cite{Baboulin2011SIMAX}, the {\em absolute} normwise condition number of $L^\top \bsx$ can be characterized by
\begin{align}\label{eq:norm}
	\kappa_1 (L, A,\bsb)&=\max_{[\Delta A,~ \Delta \bsb]\neq 0} \frac{\|h'(A,\bsb) \cdot (\Delta A,\Delta \bsb) \|_2}{\big\|[\Delta A~\Delta \bsb] \big\|_F} \nonumber\\
&= \left ( 1+\|\bsx_n\|_2^2 \right ) ^{1/2}  \left\| L^\top  P^{-1} \Big( A^\top A +  \sigma_{n+1}^2 \bigg( I_n - \frac{2\bsx_n \bsx_n^\top}{1+\|\bsx_n \|_2^2} \Big) \Big)P^{-1} L \right\|^{1/2}_2,
\end{align}
where
\begin{equation}\label{eq:p}
	P=A^\top A-\sigma_{n+1}^2 I_n.
\end{equation}
Later, in \cite{JiaLi2013}, an equivalent expression of  $\kappa_1(I_n, A,\bsb)$  was given by
\begin{equation}\label{eq:k1}
	\kappa_1(I_n, A,\bsb)=\sqrt{1+\|\bsx_n\|_2^2}\left\|V_{11}^{-\top}S\right\|^{1/2}_2,
\end{equation}
where $V_{11}$ is defined by \eqref{eq:vp} with $k=n$ and $S={\rm diag}([s_1,\ldots,s_n]^\top)$ with
\[
s_i=\frac{\sqrt{\sigma_{i}^2+\sigma_{n+1}^2}}{\sigma_i^2-\sigma_{n+1}^2}.
\]

Recall that $\kappa(A,\bsb)$ is given by \eqref{eq:k ex}.  The relationship between the upper bound for $\kappa(A,\bsb)$ and the corresponding counterpart for $\kappa_1(I_n, A,\bsb)$ was studied in \cite[\S 2.5]{gtt13max}.  The following theorem shows the equivalence of $\kappa(A,\bsb)$ and $\kappa_1(I_n, A,\bsb)$.
\begin{theorem}
For the untruncated  TLS problem \eqref{tls}, the explicit expression of $\kappa(A,\bsb)$ given by \eqref{eq:k ex} with $k=n$ is  equivalent to that of $\kappa_1(I_n, A,\bsb)$ given by \eqref{eq:k1}
\end{theorem}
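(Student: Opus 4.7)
The plan is to specialize the formula for $M_k$ in \eqref{eq:mk} to $k=n$, exploit the orthogonality of the SVD factors together with the TLS solution formula $\bsx_n=-V_{12}/V_{22}$ to collapse the prefactor $[I_n~\bsx_n]VK$, and then evaluate $M_n M_n^\top$ in closed form. The genericity condition \eqref{eq:ge con} guarantees $V_{22}\neq 0$, which for $k=n$ is a nonzero scalar satisfying $V_{22}^2=1/(1+\|\bsx_n\|_2^2)$, and also makes $V_{11}$ invertible via $V_{11}V_{11}^\top=I_n-V_{22}^2\bsx_n\bsx_n^\top$. Because $n-k+1=1$, the permutation $\Pi_{1,n}$ equals $I_n$, so $K$ has the simple block form with top block $V_{22}I_n$ and bottom block $V_{21}$. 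Moreover $k<p=\min\{m,n+1\}$ forces $m\geq n+1$, and $\Sigma_2^\top\Sigma_2=\sigma_{n+1}^2$ regardless of whether $m=n+1$ or $m>n+1$.

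First I would simplify $[I_n~\bsx_n]\,V K$. Multiplying $[I_n~\bsx_n]$ through the block partition of $V$ yields $[V_{11}+\bsx_n V_{21},\ V_{12}+\bsx_n V_{22}]$, whose second block vanishes because $\bsx_n V_{22}=-V_{12}$. Combining with $K$ leaves $V_{22}\,(V_{11}+\bsx_n V_{21})$. The key identity to establish is
\[
V_{11}+\bsx_n V_{21}=V_{11}^{-\top},
\]
which follows from several consequences of $VV^\top=I_{n+1}$: using $V_{11}V_{21}^\top+V_{12}V_{22}=0$ and $V_{12}=-V_{22}\bsx_n$ gives $V_{21}^\top=V_{22}^2 V_{11}^{-1}\bsx_n$, while $V_{11}V_{11}^\top+V_{12}V_{12}^\top=I_n$ gives $V_{11}V_{11}^\top=I_n-V_{22}^2\bsx_n\bsx_n^\top$; combining these yields $(V_{11}+\bsx_n V_{21})V_{11}^\top=V_{11}V_{11}^\top+V_{22}^2\bsx_n\bsx_n^\top=I_n$. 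The prefactor therefore collapses to $\tfrac{1}{V_{22}}V_{11}^{-\top}$, so
\[
M_n=\tfrac{1}{V_{22}}\,V_{11}^{-\top}\,D^{-1}\,[I_n\otimes\Sigma_2^\top\ \ \Sigma_1]\,W.
\]

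Next I would compute $M_n M_n^\top$. Using \eqref{eq:kron1} and the orthogonality of $U$ and $V$, each block of $WW^\top$ simplifies via $(X_1\otimes X_2)(Y_1\otimes Y_2)=(X_1Y_1)\otimes(X_2Y_2)$ to either an identity block or zero, giving $WW^\top=I$. Then
\[
[I_n\otimes\Sigma_2^\top\ \ \Sigma_1]\,[I_n\otimes\Sigma_2^\top\ \ \Sigma_1]^\top=I_n\otimes(\Sigma_2^\top\Sigma_2)+\Sigma_1^2=\sigma_{n+1}^2 I_n+\Sigma_1^2,
\]
and $D^{-1}(\sigma_{n+1}^2 I_n+\Sigma_1^2)D^{-1}$ is diagonal with entries $(\sigma_i^2+\sigma_{n+1}^2)/(\sigma_i^2-\sigma_{n+1}^2)^2=s_i^2$, yielding $M_n M_n^\top=(1+\|\bsx_n\|_2^2)\,V_{11}^{-\top}S^2 V_{11}^{-1}$. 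Since $V_{11}^{-\top}S^2V_{11}^{-1}=(V_{11}^{-\top}S)(V_{11}^{-\top}S)^\top$, taking spectral norms gives $\|M_n\|_2=\sqrt{1+\|\bsx_n\|_2^2}\,\|V_{11}^{-\top}S\|_2$, matching $\kappa_1(I_n,A,\bsb)$ in \eqref{eq:k1}.

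The main obstacle is the identity $V_{11}+\bsx_n V_{21}=V_{11}^{-\top}$, which becomes transparent only after combining three distinct block-orthogonality consequences of $VV^\top=I_{n+1}$ with the TLS formula for $\bsx_n$. Once that step is completed, the remaining Kronecker algebra is routine, and the cases $m=n+1$ and $m>n+1$ are unified by $\Sigma_2^\top\Sigma_2=\sigma_{n+1}^2$.
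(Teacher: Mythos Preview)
Your proof is correct and follows essentially the same route as the paper: specialize $M_k$ to $k=n$, collapse the prefactor $[I_n~\bsx_n]VK$ to $(1/V_{22})V_{11}^{-\top}$, then compute $M_nM_n^\top$ using $WW^\top=I$ and the diagonal structure of $D$ and $\Sigma_1,\Sigma_2$. The only cosmetic difference is in verifying the collapse of the prefactor: the paper uses the block identities from $V^\top V=I_{n+1}$ to show $V_{11}^\top\bigl(V_{11}-V_{22}^{-1}V_{12}V_{21}\bigr)=I_n$, whereas you use the identities from $VV^\top=I_{n+1}$ to show $(V_{11}+\bsx_n V_{21})V_{11}^\top=I_n$; since $V_{11}+\bsx_n V_{21}=V_{11}-V_{22}^{-1}V_{12}V_{21}$, these are the same square matrix being shown to be a left versus right inverse of $V_{11}^\top$, which is of course equivalent.
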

\begin{proof}
When $k=n$, it is easy to see that $\Pi_{n,1}=\Pi_{1,n}=I_n$. Also we have
\begin{align}\label{eq:Mn}
	M_n&=\frac{1}{V_{22}^2} [I_n\quad \bsx_n] V K D_*^{-1} [I_n \otimes \Sigma_2^\top\quad \Sigma_1] W,
\end{align}
where $\Sigma_1=\diag([\sigma_1,\,\dots\,, \sigma_n]^\top) \in \R^{n \times n }$, $\Sigma_2=\left[\sigma_{n+1},0,\ldots, 0 \right]^\top \in \R^{m-n}$, $D_*=\Sigma_1^2-\sigma_{n+1}^2 I_n$, and
\BE\label{eq:w}
K=\begin{bmatrix}
		V_{22} I_n\\
		V_{21}
	\end{bmatrix},\quad
W=\begin{bmatrix}
	V_1^\top \otimes U_2^\top \\[2mm]
	V_2^\top \otimes U_1^\top
\end{bmatrix}.
\EE

When $k=n$, under the genericity condition \eqref{eq:genericity},  the following identities hold for the TLS solution $\bsx_n$ (cf. \cite{GolubTLS1980})
\begin{align}\label{ex:x}
 \begin{bmatrix}\bsx_n\cr -1\end{bmatrix} = -\frac{1}{V_{22}}V_{2}= -\frac{1}{V_{22}} \begin{bmatrix}
 	V_{12} \\ V_{22}
 \end{bmatrix} ,\quad   V_{22}=\frac{1}{\sqrt{1+\bsx_n^\top \bsx_n }},
\end{align}
where $V$ has the partition in \eqref{eq:vp} and $\bsx_n=-V_{12}/V_{22}$ given by \eqref{eq:xk}.
Thus it is not difficult to see that
\begin{align}\label{eq:th2.1a}
	 \frac{1}{V_{22}^2}  [I_n\quad \bsx_n] V K&=\frac{1}{V_{22}^2}  [I_n\quad  \bsx_n] \begin{bmatrix}
	 	V_{11} & V_{12} \\
	 	V_{21} & V_{22}
	 \end{bmatrix}  \begin{bmatrix}
		V_{22} I_n\\
		V_{21}
	\end{bmatrix}=\frac{1}{V_{22}} \left(   V_{11}-\frac{1}{V_{22}}  V_{12 }V_{21} \right).\end{align}

Since
$$
\begin{bmatrix}
  V_{11}^\top &V_{21}^\top \cr V_{12}^\top &V_{22}
\end{bmatrix}
\begin{bmatrix}
  V_{11}&V_{12}\cr V_{21} &V_{22}
\end{bmatrix}
=
\begin{bmatrix}
  I_{n}&0\cr 0& 1
\end{bmatrix},
$$
we know that
$$
V_{11}^\top V_{11}+V_{21}^\top  V_{21} =I_n,\quad V_{11}^\top V_{12}+V_{22}V_{21}^\top =0,
$$
thus, it can be verified that
\begin{equation}\label{eq:th2.1b}
	I_n
 =V_{11}^\top \left(V_{11}-\frac{1}{V_{22}}V_{12} V_{21} \right).
\end{equation}
Combining \eqref{eq:th2.1a} and \eqref{eq:th2.1b} with the expression of $M_n$ given by \eqref{eq:Mn}, we have
\begin{equation}\label{eq:MnN}
	M_n=\frac{1}{V_{22} } V_{11}^{-\top } D_*^{-1}  [I_n \otimes \Sigma_2^\top\quad \Sigma_1] W.
\end{equation}
This, together with $WW^\top = I_{mn}$, yields
\begin{align*}
	M_n M_n^\top
	= (1+\|\bsx_n\|_2^2 ) V_{11}^{-\top } S^2 V_{11}^{-1}  ,
\end{align*}
where $S$ is defined in \eqref{eq:k1}. Therefore, when $k=n$ the expression of  $\kappa(A,\bsb)$ given by \eqref{eq:k ex} is reduced to
$$
\kappa(A,\bsb)= \left \| M_n \right\|_2=\|M_nM_n^\top \|_2^{1/2}= \kappa_1(I_n, A,\bsb).
$$

The proof is complete. 
\qed 	
\end{proof}
\

In \cite{Zhou}, Zhou et al. defined and derived the relative mixed and componentwise condition numbers  for the untruncated TLS problem \eqref{tls} as follows: Let  $[\tilde A~ \tilde \bsb]=[A~\bsb]+[\Delta A~\Delta \bsb]$, where $\Delta A$ and $\Delta \bsb$ are the perturbations of $A$ and $\bsb$ respectively. When the norm $\|[\Delta A,\Delta \bsb]\|_F$ is small enough, for the TLS solution $\bsx_n$ of $A\bsx \approx \bsb$ and the TTLS solution $\tilde \bsx_{n}$ of $\tilde A \bsx \approx \tilde \bsb$, we have
\begin{eqnarray}
  m_1(A,\bsb)=\lim_{\epsilon \to
0}\sup_{\substack{|\Delta A| \le\epsilon  |A|,\atop |\Delta \bsb|\leq \epsilon|\bsb| }}\frac{\|\tilde \bsx_n-\bsx_n\|_\infty}{\epsilon \|\bsx_n\|_\infty}=\frac{\Big\|\big|M+N\big| ~\vect\Big(\big[\,|A|~|\bsb|\big]\Big)\Big\|_\infty}{\|\bsx\|_\infty},\label{eq:mixed}\\
  c_1(A,\bsb)=\lim_{\epsilon \to
0}\sup_{\substack{|\Delta A| \le\epsilon  |A|,\atop |\Delta \bsb|\leq \epsilon |\bsb| }} \frac{1}{\epsilon} \left\|\frac{\tilde \bsx_n-\bsx_n}{\bsx_n}\right\|_\infty =\left\|\frac{\left|M+N\right| ~\vect\Big(\big[|A|~|\bsb|\big]\Big)  } {\bsx_n}\right\|_\infty,\label{eq:mixed2}
\end{eqnarray}
where
\begin{align*}
M & = \begin{bmatrix}P^{-1}\otimes \bsb^\top-\bsx_n^\top \otimes(P^{-1}A^\top)  &\quad P^{-1}A^\top \end{bmatrix},\,
 N  = 2\sigma_{n+1}P^{-1}\bsx_n(\bsv_{n+1}^\top \otimes \bsu_{n+1}^\top),
\end{align*}
and $\bsv_{n+1}$ are the $(n+1)$-th column of $U$ and $V$ respectively.

Recently,  in \cite{ds18}, Diao and Sun defined and gave mixed and componentwise condition numbers for the linear function $L^\top \bsx_n$ as follows.
 \begin{eqnarray*}
m_{1,L}(A,\bsb) &=&\frac{\left\|~ \left| L^\top P^{-1}\left(\bsx_n^\top \otimes \left(A^\top+\frac{2\bsx_n r^\top}{1+\bsx_n^\top \bsx_n} \right )-I_n\otimes \bsr^\top \right)\right| \vect (|A|)
+\left|L^\top P^{-1}\left(A^\top+\frac{2\bsx_n \bsr^\top}{1+\bsx_n^\top \bsx_n}\right)\right| |\bsb|\right\|_\infty} {\| L^\top \bsx_n\|_\infty},\label{eq:mcp} \\
c_{1,L}(A,\bsb)&=&\left\| D_{L^\top \bsx_n}^\dagger \left|L^\top  P^{-1}\left(\bsx_n^\top \otimes \left(A^\top+\frac{2\bsx_n \bsr^\top}{1+\bsx_n^\top \bsx_n} \right )-I_n\otimes \bsr^\top \right)\right| \vect (|A|) \right.  \nonumber \\
&& \left. \quad\quad \quad+D_{L^\top \bsx}^\dagger  \left|L^\top P^{-1}\left(A^\top+\frac{2\bsx_n \bsr^\top}{1+\bsx_n^\top \bsx_n}\right)\right| |\bsb|\right\|_\infty,\label{eq:mcp2}
\end{eqnarray*}
where $\bsr=\bsb-A\bsx_n$. Moreover, when $L=I_n$, the expressions of $m_{1,I_n}(A,\bsb)$ and $c_{1,I_n}(A,\bsb)$ were  equivalent to the explicit expressions of $m_1(A,\bsb)$ and $c_1(A,\bsb)$ given by \eqref{eq:mixed}--\eqref{eq:mixed2} (cf.\cite[Theorem 3.2]{ds18}).

 In the following theorem, we prove that, when $k=n$, $m(A,\bsb)$ and $c(A,\bsb)$ given by Theorem \ref{th:mixed} are reduced to those of $m_1(A,\bsb)$ and $c_1(A,\bsb)$, respectively.

 \begin{theorem}
 	Using the notations above, when $k=n$, we have $m(A,\bsb)=m_1(A,\bsb)$ and $c(A,\bsb)=c_1(A,\bsb)$.

 \end{theorem}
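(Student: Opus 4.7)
The plan is to identify, as matrices, the Fréchet derivative coefficient $M_n$ appearing in Lemma \ref{lem:1} (with $k=n$) with the coefficient $M+N$ appearing in Zhou et al.'s first-order expansion leading to \eqref{eq:mixed}--\eqref{eq:mixed2} for the untruncated TLS. Once the matrix identity $M_n = M+N$ is in hand, passing to entrywise absolute values gives $|M_n|=|M+N|$, and substituting into the expressions \eqref{eq:213a}--\eqref{eq:213b} of Theorem \ref{th:mixed} reduces them verbatim to \eqref{eq:mixed}--\eqref{eq:mixed2}, yielding $m(A,\bsb)=m_1(A,\bsb)$ and $c(A,\bsb)=c_1(A,\bsb)$ with no further computation.

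To justify $M_n = M+N$, I would first verify that the two genericity frameworks are compatible at $k=n$. The untruncated TLS condition $\widetilde\sigma_n > \sigma_{n+1}$, combined with the interlacing inequality $\sigma_n \ge \widetilde\sigma_n$ and the identity $V_{22} = 1/\sqrt{1+\|\bsx_n\|_2^2}\neq 0$ from \eqref{ex:x}, implies the TTLS genericity conditions \eqref{eq:generic} and \eqref{eq:ge con}; moreover the TTLS solution formula \eqref{eq:xk} reduces to the classical TLS solution. Consequently, for the same perturbed augmented matrix $[A~\bsb]+\Delta H$ with $\|\Delta H\|_F$ small, Lemma \ref{lem:1} gives
\[
\tilde\bsx_n - \bsx_n = M_n \vect(\Delta H) + \Oh(\|\Delta H\|_F^2),
\]
while the derivation of \cite{Zhou} gives
\[
\tilde\bsx_n - \bsx_n = (M+N) \vect(\Delta H) + \Oh(\|\Delta H\|_F^2).
\]
Subtracting and invoking the uniqueness of the linear part of a differentiable mapping (applied in arbitrary directions $\vect(\Delta H)$) forces $M_n = M+N$ as matrices. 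From here the theorem is one line: apply the absolute value entrywise and compare \eqref{eq:213a}--\eqref{eq:213b} with \eqref{eq:mixed}--\eqref{eq:mixed2}.

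The main obstacle, should one insist on bypassing the uniqueness-of-expansion argument and proceed purely algebraically, lies in a direct reduction of the simplified form
\[
M_n = \frac{1}{V_{22}}V_{11}^{-\top} D_*^{-1}[I_n \otimes \Sigma_2^\top ~~ \Sigma_1]W
\]
derived in \eqref{eq:MnN} into the Kronecker form of $M+N$. Such a reduction would require substituting $\bsx_n = -V_{12}/V_{22}$, the SVD identities $[A~\bsb]V_1 = U_1\Sigma_1$ and $[A~\bsb]V_2 = U_2\Sigma_2$, the defining relation $P = A^\top A-\sigma_{n+1}^2 I_n$, and the orthogonality $V_1 V_1^\top + V_2 V_2^\top = I_{n+1}$ to re-express $V_{11}^{-\top}$ through $P$. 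That route is bookkeeping-heavy but routine, and I would fall back on it only if the uniqueness-of-Fréchet-derivative argument were deemed insufficiently explicit; in either case, the final step equating the two condition number formulas is immediate once $M_n=M+N$ is known.
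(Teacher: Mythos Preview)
Your argument is correct, and it takes a genuinely different route from the paper's. The paper proceeds by a direct algebraic reduction: starting from the simplified form $M_n = \frac{1}{V_{22}}V_{11}^{-\top}D_*^{-1}[I_n\otimes\Sigma_2^\top\ \ \Sigma_1]W$ in \eqref{eq:MnN}, it uses the factorization $P=V_{11}D_*V_{11}^\top$ to write $M_n=P^{-1}(M_{n,1}+M_{n,2})$, partitions each piece into the $A$-block and the $\bsb$-block, and then verifies block by block (via the SVD identities for $A^\top$, $\bsr$, and the orthogonality relations among the $V_{ij}$) that these blocks match the Diao--Sun matrices $Q_1,Q_2$, which are already known from \cite{ds18} to coincide with the Zhou et al.\ expression $M+N$. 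Your approach bypasses all of this bookkeeping by observing that both $M_n$ and $M+N$ are, by construction, the Fr\'echet derivative at $[A\ \bsb]$ of the same solution map $[A\ \bsb]\mapsto\bsx_n$, so they must coincide.

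What each approach buys: your route is shorter and conceptually cleaner, and your check that the untruncated genericity condition $\widetilde\sigma_n>\sigma_{n+1}$ implies the TTLS conditions \eqref{eq:generic}--\eqref{eq:ge con} at $k=n$ (via interlacing and \eqref{ex:x}) is exactly the compatibility one needs. The paper's explicit computation, on the other hand, produces as a byproduct the closed form for $M_n$ recorded in Remark~\ref{re:3.1}, which is then invoked in Proposition~\ref{pro31}; it also serves as an independent cross-check that the derivations in \cite{gtt13max}, \cite{Zhou}, and \cite{ds18} are mutually consistent. If you adopt your soft argument, you should note that Remark~\ref{re:3.1} then follows immediately from the known form of $M+N$ (or of the Diao--Sun matrix) rather than from the computation.
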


 \begin{proof} From the proof of \cite[Lemma 2]{DiaoWeiXie}, for $P$ given by \eqref{eq:p} we have 
 $$
 P=V_{11}D_*V_{11}^\top,
 $$
 where $D_*$ and $V_{11}$ are defined in \eqref{eq:Mn} and \eqref{eq:vp}, respectively. Thus, when $k=n$, using  \eqref{eq:MnN} and  \eqref{eq:w} we have
 \begin{align}\label{eq:MnN1}
 	M_n&=\frac{1}{V_{22} } V_{11}^{-\top } D_*^{-1} \left( V_1^\top \otimes (\Sigma_2^\top U_2^\top ) +\Sigma_1 (V_2^\top \otimes  U_1^\top )\right) \nonumber  \\
	&=\frac{1}{V_{22} } P^{-1} V_{11} \left( V_1^\top \otimes (\Sigma_2^\top U_2^\top ) +\Sigma_1 (V_2^\top \otimes  U_1^\top )\right)\nonumber\\
	&=P^{-1}\left(M_{n,1}+M_{n,2} \right),
\end{align}
where
\begin{align}\label{eq:Mi}
M_{n,2}= \frac{1}{V_{22} }  V_{11}  \Sigma_1 (V_2^\top \otimes  U_1^\top ),\, M_{n,1}= \frac{1}{V_{22} }  V_{11} \left ( V_1^\top \otimes (\Sigma_2^\top U_2^\top )  \right).
 \end{align}
Partition  $M_{n,1}$ and $M_{n,2}$ as follows:
 \begin{equation}\label{eq:mn12}
	M_{n,1}=[\ccN_{1}\quad\ccN_{2}], \quad M_{n,2}=[\ccT_{1}\quad\ccT_{2}],\quad  \ccN_{1},\ccT_{1}\in\R^{n\times mn}.
\end{equation}
By comparing the expressions of $m_{1,I_n }(A,\bsb)$ and $m(A,\bsb)$ with $k=n$, we only need to show that
\begin{align}\label{qnt12}
\ccN_1+\ccT_1=Q_1,\quad \quad \ccN_2+\ccT_2=Q_2,
\end{align}
where
\begin{align*}
Q_1=I_n\otimes \bsr^\top-\bsx_n^\top \otimes Q_2,\quad Q_2=A^\top+\frac{2\bsx_n \bsr^\top}{1+\bsx_n^\top \bsx_n}.
\end{align*}

Using  the SVD of $[A,\, \bsb]$ in \eqref{eq:svd}, the partitions of $V$, $U$ in \eqref{eq:vp} and $\Sigma$ in \eqref{eq:k ex}, it follows that
\begin{align}\label{eq:aform}
	A^\top
=\left ([A\quad \bsb] \begin{bmatrix}
		I_n \\ 0
	\end{bmatrix} \right)^\top
	= V_{11} \Sigma_1 U_1^\top + V_{12} \Sigma_2^\top U_{2}^\top = V_{11} \Sigma_1 U_1^\top + \sigma_{n+1} V_{12} \bsu_{n+1}^\top ,
	\end{align}
where $\bsu_{n+1}$ is the $(n+1)$-th column of $U$.  We note that
\begin{align}\label{eq:sigma2u}
	\Sigma_{2}=\sigma_{n+1} \bse_1^{(m-n)},\quad \Sigma_2^\top U_2^\top =\bsu_{n+1}^\top,
\end{align}
where  $\bse_1^{(m-n)}$ is the first column of $I_{m-n}$. From the SVD of $[A~\bsb]$ and \eqref{ex:x}, it is easy to check that
\begin{equation}\label{eq:r}
  \bsr=\bsb-A\bsx_n=-[A~\bsb]\begin{bmatrix}
    \bsx_n \cr -1
  \end{bmatrix}=\frac{1}{V_{22}}[A~\bsb]V_{2}=\frac{\sigma_{n+1}}{V_{22}}\bsu_{n+1}.
\end{equation}
Substituting \eqref{eq:aform}, \eqref{ex:x} and \eqref{eq:r} into the expression of $Q_2$ and $Q_1$ yields
\begin{align}
	Q_2&= V_{11} \Sigma_1 U_1^\top - \sigma_{n+1} V_{12} \bsu_{n+1}^\top, \label{eq:q2}\\
	Q_1&=\frac{\sigma_{n+1} }{V_{22}}I_n\otimes \bsu_{n+1}^\top+\frac{1 }{V_{22}} V_{12}^\top \otimes (V_{11} \Sigma_1 U_1^\top ) - \frac{\sigma_{n+1} }{V_{22}} V_{12}^\top \otimes  \left(  V_{12} \bsu_{n+1}^\top \right).\label{eq:q1}
\end{align}


Since
$
V_{11} V_{11}^\top +V_{12} V_{12}^\top =I_n,\,  V_{11} V_{21}^\top +V_{22} V_{12}=0,
$
we deduce that
\begin{align}\label{eq:v11v12}
	V_{11} V_{11}^\top =I_n- V_{12} V_{12}^\top ,\quad V_{11} V_{21}^\top =- V_{22} V_{12}.
\end{align}
Using \eqref{eq:Mi}, \eqref{eq:sigma2u}, and \eqref{eq:v11v12} we have
\begin{align}\label{eq:n12}
	M_{n,1}&= \frac{1}{V_{22} } \left ( V_{11}   V_1^\top \otimes (\Sigma_2^\top U_2^\top )\right)
=\frac{1}{V_{22} } \sigma_{n+1} \left[V_{11} V_{11}^\top\quad V_{11} V_{21}^\top \right]\otimes \bsu_{n+1}^\top \nonumber\\
&=\sigma_{n+1} \left[\frac{1}{V_{22}}(I_n- V_{12} V_{12}^\top) \otimes \bsu_{n+1}^\top \quad -V_{12} \otimes \bsu_{n+1}^\top \right],\nonumber\\
\ccN_{1}&=\frac{\sigma_{n+1}}{V_{22}}(I_n- V_{12} V_{12}^\top) \otimes \bsu_{n+1}^\top,\quad   \ccN_{2}=-\sigma_{n+1} V_{12} \otimes \bsu_{n+1}^\top = -\sigma_{n+1} V_{12} \bsu_{n+1}^\top.
\end{align}
Using the partition of $V_2^\top =[V_{12}^\top\quad V_{22}] $ and \eqref{eq:Mi} we have
\begin{align}\label{eq:t12}
   M_{n,2} &= \frac{1}{V_{22}}\left[V_{11}\Sigma_{1}(V_{12}^\top\otimes U_{1}^\top)\quad V_{11}\Sigma_{1}(V_{22}^\top\otimes U_{1}^\top)\right],\nonumber\\
\ccT_1&= \frac{1}{V_{22} }  V_{11}  \Sigma_1 (V_{12}^\top \otimes  U_1^\top )= \frac{1}{V_{22}}  V_{12}^\top \otimes (  V_{11}  \Sigma_1 U_1^\top  ),
\ccT_2=\frac{1}{V_{22}} V_{11} \Sigma_1 V_{22}  U_1^\top=  V_{11} \Sigma_1 U_1^\top.
\end{align}
From \eqref{eq:n12}, \eqref{eq:t12},  \eqref{eq:q1}, and \eqref{eq:q2}, it is easy to check that the two inequalities in \eqref{qnt12} hold.
%
The proof is complete.
%
%
 	\qed
 \end{proof}

\begin{remark}\label{re:3.1}
From \eqref{eq:MnN1}, \eqref{eq:mn12}, \eqref{qnt12}  we have
$$
M_n=P^{-1} \left[-\bsx_n^\top \otimes \left(A^\top+\frac{2\bsx_n \bsr^\top}{1+\bsx_n ^\top \bsx_n } \right )+I_n\otimes \bsr^\top  \quad A^\top+\frac{2\bsx_n\bsr^\top}{1+\bsx_n^\top \bsx_n} \right].
$$
\end{remark}

 The structured condition numbers for the untruncated TLS problem with linear structures were studied by
Li and Jia in \cite{LiJia2011}.  For the structured matrix $A$ defined by \eqref{eq:A linear}, denote
\begin{align}
\ccK &=P^{-1}
\left(2 A^\top \frac{\bsr\bsr^\top }{\|\bsr\|_2^2} G(\bsx_n )-A^\top G(\bsx_n ) + \left[
	I_n \otimes \bsr^\top \quad 0
\right] \right ),\quad  G(\bsx_n)=\left[
	\bsx_n^\top\quad -1
\right] \otimes I_m, \label{eq:K}
\end{align}
where  $P$ is defined by \eqref{eq:p}. The structured mixed condition number $m_{s,n}(\bsa,\bsb)$ is characterized
as \cite{LiJia2011}
\begin{eqnarray}\label{eq:str mixed}
m_{s,n} (\bsa,\bsb) &=& \lim_{\epsilon \rightarrow
0}\sup_{\substack{ |\Delta \bsa| \le\epsilon  |\bsa|,\\ |\Delta \bsb| \le\epsilon  |\bsb|  } }\frac{\| \tilde \bsx_n-\bsx_n\|_\infty }{\epsilon \|\bsx_n \|_\infty }  = \frac{\left\| ~\left |\ccK  \begin{bmatrix}
  \ccM & 0\cr
 0  &I_m
\end{bmatrix} \right| \begin{bmatrix}
	|\bsa| \\ |\bsb|
\end{bmatrix}\,\right\|_\infty  }{\|\bsx_n\|_\infty }.
\end{eqnarray}
In \cite[Theorem 4.3]{LiJia2011},  Li and Jia  proved that $m_{s,n} (\bsa,\bsb) \leq  m_1 (A,\bsb)$ and $\ccK=M_n$, where $m_1 (A,\bsb)$ is given by \eqref{eq:mixed}. Hence we have the following proposition. Indeed, we prove that, when $k=n$, the expression of $m_{s,n}(\bsa,\bsb)$  given by \eqref{eq:str mixed} is  reduced to  that of $m_s(\bsa,\bsb)$ in Theorem \ref{th:mixed s}.

\begin{proposition}\label{pro31}
Using the notations above, when $k=n$, we have $m_s(\bsa,\bsb)=m_{s,n}(\bsa,\bsb)$.
\end{proposition}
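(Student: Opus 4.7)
The plan is to reduce the proposition to the single algebraic identity $\ccK = M_n$, where $M_n$ is the matrix obtained from $M_k$ in \eqref{eq:mk} by setting $k=n$. Once this identity is established, the formulas for $m_s(\bsa,\bsb)$ in Theorem \ref{th:mixed s} (specialised to $k=n$) and for $m_{s,n}(\bsa,\bsb)$ in \eqref{eq:str mixed} have identical numerators, and the denominators $\|\bsx_n\|_\infty$ coincide because $\bsx_k = \bsx_n$ when $k=n$; the proposition then follows at once.

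To verify $\ccK=M_n$, I would use the closed form for $M_n$ recorded in Remark \ref{re:3.1},
\[
M_n=P^{-1}\left[\,-\bsx_n^\top\otimes\Bigl(A^\top+\tfrac{2\bsx_n\bsr^\top}{1+\bsx_n^\top\bsx_n}\Bigr)+I_n\otimes\bsr^\top\quad A^\top+\tfrac{2\bsx_n\bsr^\top}{1+\bsx_n^\top\bsx_n}\,\right],
\]
and compare it with the defining expression \eqref{eq:K} for $\ccK$. Writing $G(\bsx_n)=[\bsx_n^\top\otimes I_m,\;-I_m]$, the two summands in $\ccK$ involving $G(\bsx_n)$ split columnwise, so
\[
-A^\top G(\bsx_n)=[-\bsx_n^\top\otimes A^\top,\;A^\top],\qquad
2A^\top\tfrac{\bsr\bsr^\top}{\|\bsr\|_2^2}G(\bsx_n)=\Bigl[\bsx_n^\top\otimes\bigl(\tfrac{2A^\top\bsr\bsr^\top}{\|\bsr\|_2^2}\bigr),\;-\tfrac{2A^\top\bsr\bsr^\top}{\|\bsr\|_2^2}\Bigr].
\]

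The main (and essentially only) computational step is to replace $A^\top\bsr$ and $\|\bsr\|_2^2$ by the correct scalar/vector quantities in $\bsx_n$ and $\sigma_{n+1}$. Using the normal equation $(A^\top A-\sigma_{n+1}^2I_n)\bsx_n=A^\top\bsb$ I get $A^\top\bsr=A^\top(\bsb-A\bsx_n)=-\sigma_{n+1}^2\bsx_n$, while the identities \eqref{ex:x} and \eqref{eq:r} yield $\|\bsr\|_2^2=\sigma_{n+1}^2(1+\bsx_n^\top\bsx_n)$. Substituting these gives
\[
\tfrac{2A^\top\bsr\bsr^\top}{\|\bsr\|_2^2}=-\tfrac{2\bsx_n\bsr^\top}{1+\bsx_n^\top\bsx_n},
\]
so the two contributions combine to produce exactly the two block columns in $M_n$, up to the remaining $+[I_n\otimes\bsr^\top,\,0]$ summand that appears identically in $\ccK$.

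The step I expect to require the most care is the Kronecker bookkeeping needed to line up the column blocks of $\ccK$ with those of $M_n$ (in particular, recognising that $\bsx_n^\top\otimes M=(\bsx_n^\top\otimes I_m)M$ and similar manipulations are what produce the Kronecker factor $\bsx_n^\top\otimes(\cdot)$). Once $\ccK=M_n$ is in hand, the equality $m_s(\bsa,\bsb)=m_{s,n}(\bsa,\bsb)$ is immediate from \eqref{eq:str mixed} and Theorem \ref{th:mixed s}, completing the proof.
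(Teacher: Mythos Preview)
Your proposal is correct and follows the same reduction as the paper: both arguments rest on the identity $\ccK=M_n$, after which the formulas in Theorem~\ref{th:mixed s} and \eqref{eq:str mixed} coincide term-by-term. The paper, however, does not carry out the verification of $\ccK=M_n$; it simply invokes \cite[Theorem~4.3]{LiJia2011} in the sentence preceding the proposition. Your direct computation via Remark~\ref{re:3.1}, the normal equation $A^\top\bsr=-\sigma_{n+1}^2\bsx_n$, and $\|\bsr\|_2^2=\sigma_{n+1}^2(1+\bsx_n^\top\bsx_n)$ gives a self-contained proof that avoids the external citation. (One small slip: in your parenthetical you wrote $\bsx_n^\top\otimes M=(\bsx_n^\top\otimes I_m)M$; the identity you actually use, and which is correct, is $M(\bsx_n^\top\otimes I_m)=\bsx_n^\top\otimes M$ for $M\in\R^{n\times m}$.)
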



 \section{\bf Small sample statistical condition estimation}\label{sec:SCE}


Based on small sample statistical condition estimation (SCE), reliable condition estimation algorithms for both unstructured and structured normwise, mixed and componentwise are devised,  which utilize  the SVD of the augmented matrix $[A~\bsb]$.  In the following, we first review  the basic idea of SCE. Let   $\psi:\R^{p}\rightarrow \R$ be a differentiable function.  For the input vector $u$, we want to estimate the sensitivity of the output $\psi(\bsu)$ with respect to small perturbation $\epsilon \bsd$ on $\bsu$, where $\bsd$ is a unit vector and $\epsilon$ is a small positive number. The Taylor expansion of $\psi$ at $\bsu$ is given by
$$
\psi (\bsu+\epsilon \bsd)=\psi(\bsu)+\epsilon (\nabla \psi(\bsu))^\rt \bsd+\Oh(\epsilon ^2),
$$
where $\nabla \psi(\bsu)\in \R^p$ is the gradient of $\psi$ at
$\bsu$. Neglecting the second and higher order terms of $\epsilon$ we have
$$
\left| \psi (\bsu+\epsilon \bsd)- \psi (\bsu)\right|\approx \epsilon (\nabla \psi(\bsu))^\rt \bsd,
$$
from which we conclude that the local sensitivity can be measured by
$\|\nabla \psi(\bsu)\|_2$. Let the Wallis factor be given by  \cite{KenneyLaub_SISC94}
$$
\omega_p=
\begin{cases}
1, & \text{for}~p\equiv 1,\\
\frac{2}{\pi}, &\text{for}~p\equiv 2,\\
\frac{1 \cdot 3 \cdot 5 \cdots (p-2)}{2 \cdot 4 \cdot 6 \cdots (p-1)}, &\text{for}~p ~ \text{odd} ~\text{and} ~p>2, \\
\frac{2}{\pi} \frac{2 \cdot 4 \cdot 6 \cdots (p-2)}{1 \cdot 3 \cdot
5 \cdots (p-1)}, &\text{for}~ p ~ \text{even}~ \text{and} ~p>2. 
\end{cases}
$$
As in \cite{KenneyLaub_SISC94}, if $\bsd$ is selected  uniformly and randomly from the unit $p$-sphere
$B_{p-1}$ (denoted as $ \bsd \in \mathcal {U}(B_{p-1})$), then the expected value $
{\mathbb   E}(|(\nabla \psi (\bsu))^\rt \bsd|/\omega_p)$  is $\|\nabla
\psi (\bsu)\|_2$.  In practice, the Wallis factor can be
approximated accurately by \cite{KenneyLaub_SISC94}
\begin{equation}\label{ss:p} \omega_p \approx
\sqrt{\frac{2}{\pi(p-\frac{1}{2})}}.
\end{equation}
Therefore,  the following quantity
$$
\nu=\frac{\left|(\nabla \psi(\bsu))^\rt \bsd \right|}{\omega_p}
$$
can be used as a condition estimator for  $\|\nabla \psi (\bsu)\|_2$
with high probability for the function $\psi$ at $\bsu$. For example, let  $\gamma >1$, which indicates the accuracy of the estimator, it is shown that
$$
{\mathbb P}\left(\frac{\|\nabla \psi (\bsu)\|_2}{\gamma}\leq \nu \leq \gamma
\|\nabla \psi(\bsu)\|_2\right) \geq 1-\frac{2}{\pi
\gamma}+\Oh\left(\frac{1}{\gamma^2}\right).
$$
In general, we are interested in
finding an estimate that is accurate to a factor of 10 ($\gamma =10$). The accuracy of the condition estimator can enhanced by using multiple samples of $\bsd$, denoted $\bsd_j$. The $\ell $-sample condition estimation is given by
$$
\nu(\ell )=\frac{\omega_\ell }{\omega_p }\sqrt{\sum_{j=1}^\ell \left|\nabla \psi(\bsu)^\rt \bsd_j\right|^2
},
$$
where the matrix $[\bsd_1,\ldots,\bsd_\ell ]$ is orthonormalized after $\bsd_1,\ldots,\bsd_\ell $ are selected uniformly and
randomly from $\mathcal {U}(B_{p-1})$.  Usually, at most two or three samples are sufficient for high accuracy. For example, the accuracies of $\nu(2)$ and $\nu(3)$ are given by \cite{KenneyLaub_SISC94}
\begin{align*}
{\mathbb P}\left(\frac{\|\nabla \psi (\bsu)\|_2}{\gamma}\leq \nu(2) \leq \gamma
\|\nabla \psi(\bsu)\|_2\right) &\approx 1-\frac{\pi}{4 \gamma^2},\quad \gamma>1,\\
{\mathbb P} \left(\frac{\|\nabla \psi(\bsu)\|_2}{\gamma}\leq \nu(3) \leq \gamma
\|\nabla \psi(\bsu)\|_2\right) &\approx 1-\frac{32 }{3 \pi^2 \gamma^3},\quad \gamma>1.
\end{align*}
As an illustration, for $\ell =3$ and $\gamma=10$, the estimator $\nu(3)$ has probability $0.9989$, which is within a relative factor $10$ of the true condition number $\|\nabla \psi(\bsu)\|_2$.

The above results can be easily extended  to vector-valued or matrix-valued functions.
\subsection{Normwise perturbation analysis}\label{subsec:norm}

In this subsection,  we propose an algorithm  for the normwise condition
estimation of the TTLS problem (\ref{TLS:definition}) based on SCE.  The input data of Algorithm \ref{algo:subnorm} includes the matrix $A \in \R^{m \times n}$, the vector $\bsb \in \R^m$,  the SVD of $[A~ \bsb]$, and the computed solution $\bsx_k \in \R^n$. The output includes the condition vector $\mathscr{K}_{\rm abs}^{\mathrm{TTLS},(\ell)}$ and the estimated relative condition number $\kappa_{\rm SCE}^{{\rm TTLS},(\ell)}$.

\begin{algorithm}
\caption{Small sample condition estimation for the TTLS problem under normwise perturbation analysis}
\label{algo:subnorm}
\begin{itemize}
\item[1.] Generate matrices $[\Delta \hat{A}_1~\Delta \hat{\bsb}_1],\ldots, [\Delta \hat{A}_\ell~\Delta \hat{\bsb}_\ell ]\in\R^{m\times (n+1)}$ with entries being in ${\mathcal N}(0,1)$, the standard Gaussian distribution. Orthonormalize the following matrix
$$
\left[\begin{matrix}\vect(\Delta \hat{A}_1)&\vect(\Delta\hat{ A}_2)&\cdots&
\vect(\Delta \hat{A}_\ell )\cr \Delta \hat{\bsb}_1&\Delta \hat{\bsb}_2&\cdots&
\Delta \hat{\bsb}_\ell
\end{matrix}\right]
 $$
to obtain $[\bsq_1~\bsq_2,\ldots,\bsq_\ell ]$ via the modified Gram-Schmidt orthogonalization process.
Set
\[
\Delta H_i:=[\Delta A_i ~\Delta \bsb_i]={\sf unvec}(\bsq_i),\quad i=1,\ldots,\ell.
\]
\item[2.] Let $p=m(n+1)$. Approximate $\omega_p$ and $\omega_\ell $
by~(\ref{ss:p}).
\item[3.] For $i=1,2,\ldots,\ell $, compute
\begin{equation}\label{eq:al y}
\bsg_i= \frac{1}{\|V_{22}\|_2^2}\left(V_{11} \, (Z_1^\top + Z_2)V_{22}^\top +V_{12} \, (Z_1+Z_2^\top)V_{21}^\top  + \bsx_k\sum_{j=1}^4 c_j \right),
\end{equation}
where $Z_i$ and $c_i$ are defined in \eqref{eq:dir derivative x} with $\Delta H=\Delta H_i$. Estimate the absolute condition vector
\begin{eqnarray*}
\mathscr{K}_{\rm
abs}^{\mathrm{TTLS},(\ell )}&=&\frac{\omega_\ell }{\omega_p}\sqrt{\sum_{j=1}^\ell |\bsg_j|^2}.
\end{eqnarray*}
Here, for any vector $\bsg=[g_1,\ldots, g_n]^\top\in\Rn$, ${|\bsg|^2}=[|g_{1}|^2,\ldots,|g_{n}|^2]^\top$ and  $\sqrt{|\bsg|}=[\sqrt{|g_{1}|}, \ldots, \sqrt{|g_{n}|}]^\top$.
\item[4.] Compute the normwise condition number as follows,
$$
\kappa_{\rm SCE}^{{\rm TTLS},(\ell )}=\frac{N_{\rm
SCE}^{{\rm TTLS},(\ell )} \big\|[A~\bsb]\big\|_F}{||\bsx_k ||_2},
$$
where $
N_{\rm
SCE}^{{\rm TTLS},(\ell)}:=\frac{\omega_\ell }{\omega_p}\sqrt{ \sum_{j=1}^\ell ||\bsg_j||_2^2}=\|\mathscr{K}_{\rm
abs}^{\mathrm{TTLS},(\ell )}\|_F.$
\end{itemize}
\end{algorithm}

Next, we  give some remarks on the computational cost of Algorithm \ref{algo:subnorm}.
  In Step 1,  the modified Gram-Schmidt orthogonalization process  \cite{GolubVanLoan2013Book} is adopted  to form an orthonormal matrix $[\bsq_1~\bsq_2,\ldots,\bsq_\ell]$
  and the total flop count is about $\Oh(mn\ell^2)$.
 The cost associated with step 3 is about $\Oh(\ell k(mn+m(m-\ell)+(5k-n-1)(n+1-k)))$ flops that is mainly from computing the directional derivative in Lemma~\ref{pro1}. The last step needs $\Oh(mn+n\ell )$ flops.
We note that $\ell=3$  generates a good condition estimation. In this case, the total cost of Algorithm~\ref{algo:subnorm} is  $\Oh(mnk+m^2k+ k(5k-n)(n+1-k))$, which does not exceed the cost of computing the SVD of $[A~\bsb]$ and $\bsx_k$. Furthermore, the directional derivative \eqref{eq:al y} only be computed once in one loop of Algorithm~\ref{algo:subnorm}. On the contrary, Gratton et. al \cite{gtt13max} proposed the normwise condition number estimation algorithm  through using  the power method \cite{Higham2002Book}, which needs to evaluate the 
matrix-vector products  $M_k \bsf$ and $M_k^\top \bsg$ in one loop for some suitable dimensional  vectors $\bsf$ and $\bsg$. Therefore, Algorithm~\ref{algo:subnorm} is more efficient compared with the normwise condition number estimation method in \cite{gtt13max}.

\subsection{Componentwise perturbation analysis}\label{subsec:comp}

If the perturbation in the input data is measured componentwise rather
than by norm, it may help us to measure  the sensitivity of
a function more accurately \cite{Skeel79}. The SCE method  can also be used to measure the sensitivity  of componentwise perturbations \cite{KenneyLaub_SISC94}, which may give a more realistic indication
of the accuracy of a computed solution than that from the normwise condition number.  In componentwise perturbation analysis, for a perturbation $\Delta A=(\Delta a_{ij})$ of  $A=(a_{ij}) \in \R^{m \times n}$, we assume that $|\Delta A| \leq \epsilon |A|$. Therefore, the perturbation $\Delta A$ can be rewritten as  $\Delta A=\delta\, ( {\mathcal A} \boxdot A)$ with
$|\delta|\leq \epsilon $ and each entry of    $\mathcal A$  being in the interval $[-1,1]$.  Based on the above observations, we can obtain
a componentwise sensitivity estimate of the solution $x_k $ of the TTLS problem (\ref{TLS:definition}) as follows. The detailed descriptions are given in Algorithm \ref{eq:es mixed}, which is a modification of Algorithm \ref{algo:subnorm} directly.

\begin{algorithm} \caption{Small sample condition estimation for
the TTLS problem under componentwise perturbation analysis}\label{eq:es mixed}
\begin{itemize}
\item[1.] Generate matrices $[\Delta \hat{A}_1 ~\Delta \hat{\bsb}_1]~
[\Delta \hat{A}_2 ~\Delta \hat{\bsb}_2],\ldots,[\Delta \hat{A}_{\ell} ~\Delta \hat{\bsb}_{\ell}] \in \R^{m\times (n+1)}$ with entries being in ${\mathcal N}(0,1)$. Orthonormalize the following matrix
$$
\left[\begin{matrix}\vect(\Delta \hat{A}_1 )&\vect(\Delta \hat{A}_2)&\cdots&
\vect(\Delta \hat{A}_{\ell})\cr \Delta \hat{\bsb}_1&\Delta \hat{\bsb}_2&\cdots&
\Delta \hat{\bsb}_{\ell}
\end{matrix}\right]
 $$
to obtain $[\bsq_1~\bsq_2,\ldots,\bsq_\ell]$ via the modified Gram-Schmidt orthogonalization process.
Set
\[
[\Delta \widetilde {A}_i ~\Delta \tilde \bsb_i]={\sf unvec}(\bsq_i)\quad i=1,\ldots,\ell.
\]
Set
\[
\Delta H_i:=[\Delta A_i~\Delta \bsb_i]= [\hat{A_i}~\hat{\bsb_i}]\boxdot [\Delta \widetilde A_i~\Delta\tilde \bsb_i]\quad i=1,\ldots,\ell.
\]
\item[2.] Let $p=m(n+1)$. Approximate $\omega_p$ and $\omega_{\ell}$
by~(\ref{ss:p}).
\item[3.] For $j=1,2,\ldots,\ell,$ calculate $\bsg_j$ by (\ref{eq:al y}).
Estimate the absolute condition vector
\begin{eqnarray*}
C_{\rm abs}^{\mathrm{TTLS},(\ell )}=\frac{\omega_\ell }{\omega_p}\sqrt{\sum_{j=1}^\ell |\bsg_j|^2}.
\end{eqnarray*}
\item[4.]Set the relative condition vector
$C_{\rm rel}^{\mathrm{TTLS},(\ell )}=C_{\rm abs}^{\mathrm{TTLS},(\ell )}/\bsx_k $.
Compute the mixed and componentwise condition estimations
$m_{\rm SCE}^{\mathrm{TTLS},(\ell )}$ and $c_{\rm SCE}^{\mathrm{TTLS},(\ell)}$ as follows,
\begin{equation*}
m_{\rm SCE}^{\mathrm{TTLS},(\ell )} =
\frac{\left\|C_{\rm abs}^{\mathrm{TTLS},(\ell )}\right\|_{\infty}}{\|\bsx_k\|_{\infty}}, \quad
c_{\rm SCE}^{\mathrm{TTLS},(\ell )} =\left\|C_{\rm rel}^{\mathrm{TTLS},(\ell )}\right\|_\infty=
\left\|\frac{C_{\rm abs}^{\mathrm{TTLS},(\ell )}}{\bsx_k}\right\|_{\infty}.
\end{equation*}
\end{itemize}
\end{algorithm}


To estimate the mixed and componentwise condition numbers via Algorithm \ref{eq:es mixed}, we only need additional computational cost $\ell m(n+1)$ flops comparing with Algorithm~\ref{algo:subnorm}. 

\subsection{Structured perturbation analysis}\label{subsec:str}

For the structured TLS problem, it is reasonable to consider the case that the perturbation $\Delta A$ has the same structure as $A$.
Suppose the matrix $A$ takes the form of \eqref{eq:A linear}, i.e.,  $A = \sum_{i = 1}^ta_i S_i$. Here, $S_1,\ldots,S_t$ are linearly independent matrices of a linear subspace ${\mathcal S} \subset \R^{m\times n}$.
It is easy to see that
$$
\vect (A) = \Phi^{st} \bsa,
$$
where $\bsa= [a_1, a_2,\ldots, a_t]^\top \in \R^t$ and $\Phi^{st} = [\vect (S_1), \vect (S_2),\ldots, \vect (S_t)]$.
For
\[
\mathcal S=\{\mbox{all $m\times n$ real Toeplitz matrices}\},
\]
we have $t=m+n-1$,
\begin{align*}
	 S_{1}&={\tt toeplitz}(\bse_{1},0),\,\ldots, \,
  S_{m}={\tt toeplitz }(\bse_m,0 ),\\ S_{m+1}&={\tt  toeplitz}(0,\bse_2 ) \ldots, \,
  S_{m+n-1}={\tt  toeplitz }(0,\bse_n),
\end{align*}
where the \textsc{Matlab}-routine  notation $A={\tt toeplitz}(T_c,T_r)\in{\mathcal S}$ denotes a Toeplitz matrix having  $T_c\in\Rm$ as its first column and  $T_r\in\Rn$ as its first row, and $A= \sum_{i = 1}^ta_i S_i$, where $\bsa=[T_c^{\top}, T_r(2:end)]^{\top}\in \R^{m+n-1}$. This means that a Toeplitz matrix $A$ can be obtained by taking ${\mathcal S}=\R^{m+n-1}$ and letting $\tau$ be the map
$$
\tau(\bsa)=\begin{bmatrix}
  a_{1}     & a_{m+1}&\cdots & a_{m+n-2} &a_{m+n-1}\cr
  a_{2} & a_1    & \cdots  &                    a_{m+n-3} &a_{m+n-2} \cr
  \vdots &\vdots&\ddots & \vdots & \vdots\cr
  a_{m-1} &a_{m-2} &\cdots & a_{n-1} &a_{n}\cr
  a_{m} & a_{m-1} &\cdots & a_{n-2} &a_{n-1}
\end{bmatrix}=A \in \R^{m \times n},\quad \forall \bsa=\begin{bmatrix}
  a_1     \cr
  a_2 \cr
  \vdots \cr
  a_{m+n-1}
\end{bmatrix}\in\R^{m+n-1}.
$$

The SCE method maintains the desired matrix structure by working with the  perturbations of $A$ and $\bsb$ in the linear space of ${\mathcal S}\times \R^m$. This produces only slight changes in the SCE algorithm. By simply generating $\Delta a_i$ and $\Delta \bsb$ randomly instead of $\Delta A=$ and $\Delta \bsb$ as in Algorithm~\ref{algo:subnorm}, we obtain an algorithm to estimate the condition of a composite map $f\circ\tau$. We summarize the structured normwise condition estimation  in Algorithm \ref{algstr}, which also includes  the structured componentwise condition estimation. The computational cost of Algorithm \ref{algstr} is reported in Table \ref{tab:complexity_structuredsce}.

\begin{algorithm} \caption{Small sample condition estimation for the STTLS problem under structured perturbation analysis}
\label{algstr}
\begin{itemize}
\item[1.] Generate matrices $[\Delta \hat{\bsa}_1,\Delta \hat{\bsa}_2, \ldots,\Delta \hat{\bsa}_{\ell}],~ [\Delta \hat{\bsb}_1,\Delta \hat{\bsb}_2, \ldots, \Delta \hat{\bsb}_{\ell}]$ with
entries in ${\mathcal{N}}(0,1),$ where $\Delta \hat{\bsa}_i \in \R^t$ and $\Delta \hat{\bsb}_i \in \R^m$. Orthonormalize the following matrix
$$
\left[\begin{matrix}\Delta \hat{\bsa}_1 & \Delta \hat{\bsa}_2 &\cdots&
\Delta \hat{\bsa}_{\ell} \cr \Delta \hat{\bsb}_1& \Delta \hat{\bsb}_2&\cdots&
\Delta \hat{\bsb}_{\ell}
\end{matrix}\right],
$$
to obtain an orthonormal matrix $[\xi_1~\xi_2,\ldots,\xi_{\ell}]$ by using the modified Gram-Schmidt orthogonalization process.
Set
\[
[\Delta \tilde \bsa_i^\top ~ \Delta  \tilde \bsb_i^\top]^\top=\xi_i,\quad i=1,\ldots, \ell.
\]
Set
\[
[\Delta \bsa_{i}^{\top}~ \Delta \bsb_i^{\top}]^\top=[ \hat{\bsa}_i^{\top}~ \hat{\bsb}_i^{\top}]^\top\boxdot [\Delta  \tilde \bsa_i^\top ~ \Delta  \tilde \bsb_i^\top]^\top,\quad i=1,\ldots, \ell
\]
and
\[
\Delta H_i:=[\Delta A_i\quad\Delta \bsb_i],\quad \Delta A_i=\sum_{j=1}^t\Delta a_jS_j, \quad i=1,\ldots,\ell.
\]

\item[2.]  Let $p=t+m$. Approximate $\omega_p$ and $\omega_{\ell}$ by (\ref{ss:p}).

\item[3.] For $j=1,2,\ldots,\ell,$ calculate $\bsg_j$ by (\ref{eq:al y}).
Compute the absolute condition vector
\[
\bar{K}_{abs} = \frac{\omega_{\ell}}{\omega_t} \sqrt{ \sum_{j=1}^\ell |\bsg_j|^2}.
\]
\item[4.] Compute the normwise condition numbers as follows:
$$
\kappa_{\rm SCE}^{\mathrm{STTLS},(\ell)}=\frac{\left\|\bar{K}_{abs} \right\|_2 \left\|~[\bsa^\top~\bsb^\top]^\top\right\|_2}{\|\bsx_{k}\|_2},
$$
Compute the mixed and componentwise condition estimations
$m_{\rm SCE}^{\mathrm{STTLS},(\ell )}$ and $c_{\rm SCE}^{\mathrm{STTLS},(\ell)}$ as follows,
\begin{equation*}
m_{\rm SCE}^{\mathrm{STTLS},(\ell )} =
\frac{\left\|\bar{K}^{S}_{abs}\right\|_{\infty}}{\|\bsx_k\|_{\infty}}, \quad
c_{\rm SCE}^{\mathrm{TTLS},(\ell )} =
\left\|\frac{\bar{K}^{S}_{abs}}{\bsx_k}\right\|_{\infty}.
\end{equation*}
\end{itemize}
\end{algorithm}

\begin{table}[!htbp]
\caption{\label{tab:complexity_structuredsce} Computational complexity for Algorithm \ref{algstr}. }
\begin{center}
\begin{tabular}{ccccc}\hline
 Step & & 1 & 2 & 3  \\
\hline
  Algorithm \ref{algstr}  & & $\Oh(\ell^2(m+r)+\ell(m+r))$     &  $\Oh(mn)$      &      $\Oh(mn\ell+n^2\ell)$   \\
 \hline
\end{tabular}
\end{center}
\end{table}

\section{\bf Numerical examples}\label{sec:ex}


In this section, we  present  some numerical examples to illustrate the reliability of
the SCE for the TTLS problem \eqref{TLS:definition}. For a given TTLS problem, the TTLS solution $\bsx_k$ with truncation level $k$ can be computed by utilizing the SVD of $[A~\bsb]$ and \eqref{ex:x}.  The corresponding exact condition numbers are computed by their explicit expressions associated with the given data $[A~\bsb]$. All the sample number $\ell$ in Algorithms \ref{algo:subnorm} to \ref{algstr} are set to be $\ell=3$. 
All the numerical experiments are carried out on \textsc{Matlab} R2019b with the machine epsilon $\mu \approx 2.2 \times 10^{-16}$ under Microsoft Windows 10.

\begin{example}\label{example:small}
\em Let
$$A=\left[\begin{matrix} 2&0\\0&3\\0&10^{-s}\\
\end{matrix}\right]
,\quad
\bsb=\left[\begin{matrix}10^{-s}\\0\\1
\end{matrix}\right],
$$
where $s\in \mathbb R_+$.

\end{example}

In Table \ref{table:small}, we compare our SCE-based estimations $\kappa_{\rm SCE}^{{\rm TTLS},(\ell)}$, $m_{\rm SCE}^{\mathrm{TTLS},(\ell)}$ and $c_{\rm SCE}^{\mathrm{TTLS},(\ell)}$ from Algorithms \ref{algo:subnorm} and \ref{eq:es mixed} with the corresponding exact condition numbers for Example \ref{example:small}. The symbol  $\lq\lq\times"$ in Table \ref{table:small} means the condition numbers $\kappa^{rel}_1(A,\bsb)$, $m_1(A,\bsb)$ and $c_1(A,\bsb)$  are not defined for the truncation level $k=1$.  From the numerical results listed in Table \ref{table:small}, it is easy to find that the normwise condition number $\kappa^{rel}(A,\bsb)$ defined by \eqref{eq:k rel} are much greater than results of mixed and componentwise condition numbers $m(A,\bsb)$ and $c(A,\bsb)$ given by Theorem \ref{th:mixed}. The (untruncated) TTL problem \eqref{TLS:definition} is well conditioned under componentwise perturbations regardless of the choice of $s$ and $k$. Compared with the normwise condition number, the mixed and componentwise condition numbers may capture the  true conditioning of this TTLS problem. We also observe  that the SCE-based condition estimations can provide reliable estimations. Moreover,
numerical results of the untruncated mixed and componentwise condition numbers $m(A,\bsb)$ and $c(A,\bsb)$ given in Theorem \ref{th:mixed} are equal to corresponding values of the untruncated ones $m_1(A,\bsb)$ and $c_1(A,\bsb)$ given by \eqref{eq:mixed}--\eqref{eq:mixed2}, respectively. The similar conclusion can be drawn for comparing  $\kappa^{rel}(A,\bsb)$ and $\kappa^{rel}_1(A,\bsb)$ for $k=2$. 

\begin{table}\centering
\caption{\label{table:small}\small Comparisons of exact normwise, mixed and componentwise condition numbers with SCE-based condition estimations  for Example \ref{example:small} via Algorithms \ref{algo:subnorm} and \ref{eq:es mixed} with $\ell=3$.
}
{\scriptsize
\begin{tabular}{|ccccccccccc|}\hline
$s$&$k$&$\kappa^{rel}(A,\bsb)$ &$\kappa^{rel}_1(A,\bsb)$ & $\kappa_{\rm SCE}^{{\rm TTLS},(\ell)}$ &$m(A,\bsb)$ &$m_1(A,\bsb)$  & $m_{\rm SCE}^{\mathrm{TTLS},(\ell)}$ & $c(A,\bsb)$&$c_1(A,\bsb)$ & $c_{\rm SCE}^{\mathrm{TTLS},(\ell)}$ \\
\hline
$3$&$1$& $1.18\cdot 10^4$ &$\times$& $1.15\cdot 10^4$ & $4.50$&$\times$& $2.70$ & $16.20$ &$\times$& $11.65$ \\

3$$&$2$& $4.11 \cdot 10^3$&$4.11 \cdot 10^3$ & $5.46\cdot 10^3$ & $3.33$& $3.33$& $1.40$ & $4.50$ &$4.50$& $2.19$ \\
\hline
$6$&$1$& $1.18\cdot 10^7$&$\times$ & $1.51\cdot 10^7$ & $4.50$&$\times$& $3.02$ & $16.05$&$\times$ & $10.16$  \\

$6$&$2$& $4.11 \cdot 10^6$ &$4.11 \cdot 10^6$& $5.67\cdot 10^6$ & $3.33$& $3.33$& $2.35$ & $4.50$ &$4.50$ & $2.35$  \\
\hline
$9$&$1$& $1.18\cdot 10^{10}$&$\times$ & $1.42\cdot 10^{10}$ & $4.50$& $\times$ & $2.31$ & $4.50$&$\times$& $2.31$ \\

$9$&$2$& $4.11\cdot 10^{9}$&$4.11\cdot 10^{9}$ & $2.98\cdot 10^{9}$ & $3.33$&$3.33$& $2.43$ &$4.50$&$4.50$& $3.69$ \\
\hline
$12$&$1$& $1.18\cdot 10^{13}$&$\times$ & $1.61\cdot 10^{13}$ & $4.50$& $\times$ & $3.37$ & $4.50$&$\times$& $3.37$ \\

$12$&$2$& $4.11\cdot 10^{12}$&$4.11\cdot 10^{12}$ & $3.83\cdot 10^{12}$ & $3.33$&$3.33$& $1.20$ &$4.50$&$4.50$& $3.17$ \\
\hline
\end{tabular}
}
\end{table}

\begin{example}\label{example:toeplitzk6} \protect\cite{VanHuffelVandewalle1991Book}
\em 
Let the data matrix $A$ and the observation vector $\bsb$ be given by
$$A=\left[\begin{matrix} m-1&-1&\cdots&-1\\
               -1&m-1&\cdots&-1\\
               \vdots&\vdots&\ddots&\vdots\\
               -1&-1&\cdots&m-1\\
               -1&-1&\cdots&-1\\
               -1&-1&\cdots&-1
\end{matrix}\right]\in \R^{m \times (m-2)},\quad
\bsb=\left[\begin{matrix}-1\\-1\\\vdots\\m-1\\-1
\end{matrix}
\right]\in \R^{m}.
$$
Since the first $m$-$2$ singular values of the augmented matrix $[A~\bsb]$ are equal and larger than the $(m-1)$-th singular value $\sigma_{m-1}$, the truncated level $k$ can only be $m-2$. It is clear that $A$ is a Toeplitz matrix.  
\end{example}

A condition estimation is said to be reliable if  the estimations fall within one tenth to ten times of the corresponding exact condition numbers (cf. \cite[Chapter 15]{Higham2002Book}).  Table \ref{table:Toeplitzsmall} displays the numerical results for Example \ref{example:toeplitzk6} by choosing from $m=100$ to $m=500$. From Table \ref{table:Toeplitzsmall}, we can conclude that Algorithms \ref{algstr} can provide reliable  mixed and componentwise   condition estimations for this specific Toeplitz matrix $A$ and the observation vector $\bsb$,  while the normwise condition estimation  may seriously overestimate the true relative normwise condition number. We also see that the unstructured mixed and componentwise condition numbers $m(A,\bsb)$, $c(A,\bsb)$ given by Theorem \ref{th:mixed} are not smaller than the corresponding structured ones $m_s(A,\bsb)$, $c_s(A,\bsb)$ shown in Theorem \ref{th:mixed s}, which is consistent with Proposition \ref{pro:relation of the unstru and the stru}. Numerical values of the structured normwise, mixed and componentwise condition number are smaller than the corresponding counterparts.

\begin{table}\centering
\caption{\label{table:Toeplitzsmall}\small Comparisons of true structured normwise, mixed and componentwise condition numbers with  SCE-based condition estimations  for Example \ref{example:toeplitzk6} via Algorithms \ref{algstr} with the truncated level $k=m-2$ and $\ell=3$.}
{\scriptsize
\begin{tabular}{|cccccccccc|}\hline
$m$&$\kappa^{rel}(A,\bsb)$ &$\kappa_s^{rel}(A,\bsb)$ & $\kappa_{\rm SCE}^{\mathrm{STTLS},(\ell)}$ &$m(A,\bsb)$ & $m_s(A,\bsb)$ & $m_{\rm SCE}^{\mathrm{STTLS},(\ell)}$ & $c(A,\bsb)$ &$c_s(A,\bsb)$ & $c_{\rm SCE}^{\mathrm{STTLS},(\ell)}$ \\
\hline
$100$& $8.98\cdot10^2$ & $9.24\cdot10^1$ & $7.37\cdot10^3$& $2.49$ & $2.49$ & $2.84$&$2.49$&$2.49$&$2.84$ \\
\hline
$200$& $1.80\cdot 10^3$ & $1.31\cdot 10^2$ & $1.93\cdot 10^4$& $2.50$ & $2.50$ & $2.27$&$2.50$&$2.50$&$2.27$ \\
\hline
$300$& $2.71\cdot 10^3$ & $1.60\cdot 10^2$ & $3.80\cdot 10^4$& $2.50$ & $2.50$ & $2.21$&$2.50$&$2.50$&$2.21$ \\
\hline
$400$& $3.61\cdot 10^3$ & $1.85\cdot 10^2$ & $5.82\cdot 10^4$& $2.50$ & $2.50$ & $2.19$&$2.50$&$2.50$&$2.19$ \\
\hline
$500$& $4.52\cdot 10^3$ & $2.06\cdot 10^2$ & $8.05\cdot 10^4$& $2.50$ & $2.50$ & $2.14$&$2.50$&$2.50$&$2.14$\\
\hline
\end{tabular}
}
\end{table}

\begin{example}\label{example:small2}
\em This test problem comes from \cite[\S 3.2]{gtt13max}. The augmented matrix $[A~\bsb] \in \R^{m\times (n+1)}$ are builded by using the SVD $[A~\bsb]=USV^{\top}$. Here, $U$ is an arbitrary orthogonal matrix with the size of $m \times m$, and $\Sigma$ be a diagonal matrix with equally spaced singular values in $[10^{-2},1]$. The matrix $V$ is generated as following:  compute the QR decomposition of  the matrix
$$
\left[\begin{matrix}
\sqrt{1-\beta^{2}}\bsc&X\\\beta \bsd&Y
\end{matrix}\right]
$$
with the Q-factor $Q$, where  $X$ and $Y$ are random matrices, $\bsc\in \R^{k}$ and $\bsd\in \R^{n+1-k}$ are normalized random vectors. Here, $k\leq \min\{m,n\}$ is truncation level. Then we set $V$ to be an orthogonal matrix that commutes the first and last rows of $Q^{\top}$. It is easy to verify that $V_{22}=\beta \bsd^{\top}$ and $\|V_{22}\|=\beta$. In this test, we take $m=400$, $n=120$, $k=80$, and $\beta=10^{-3}$. The perturbation matrices $\Delta A$ and $\Delta \bsb$ of $A$ and $\bsb$ are generated as follows:
\begin{align}\label{de:1}
 \Delta A= \epsilon\,
 (E \boxdot A), \,\,\Delta \bsb= \epsilon\, (\bsf \boxdot \bsb),
\end{align}
where $ E$ and $\bsf$ are random matrices
whose entries are uniformly distributed in the open interval $(-1,1)$, $\epsilon=10^{-8}$ represents the magnitude of the perturbation.
\end{example}


We note that   both $ \Delta A$ and $\Delta \bsb$ are componentwise perturbations on $A$ and $\bsb$ respectively. In order to illustrate the validity of these estimators  $\kappa_{\rm SCE}^{\mathrm{TTLS},(\ell)},\, m_{\rm SCE}^{\mathrm{TTLS},(\ell)}$  and
$c_{\rm SCE}^{\mathrm{TTLS},(\ell)}$ via Algorithms \ref{algo:subnorm} and
\ref{eq:es mixed}, we define
the normwise, mixed and componentwise over-estimation ratios as follows 
$$
r_\kappa=\frac{\kappa_{\rm SCE}^{\mathrm{TTLS},(\ell)} \epsilon}
{\| \tilde \bsx_k-\bsx_k\|_2/\|\bsx_k\|_2},\quad
r_m=\frac{m_{\rm SCE}^{\mathrm{TTLS},(\ell)}\epsilon}
{\| \tilde \bsx_k-\bsx_k\|_\infty/\| \bsx_k\|_\infty},\quad
r_c=\frac{c_{\rm SCE}^{\mathrm{TTLS},(\ell)}\epsilon}
{\| \frac{\tilde \bsx_k-\bsx_k}{\bsx_k} \|_\infty},
$$
Typically the ratios
in $(0.1, ~10)$ are acceptable \cite[Chap. 15]{Higham2002Book}.

\begin{figure}[htb]
\centering
\subfloat[Normwise over-estimation ratios]
{%
\includegraphics[width=.33\textwidth]{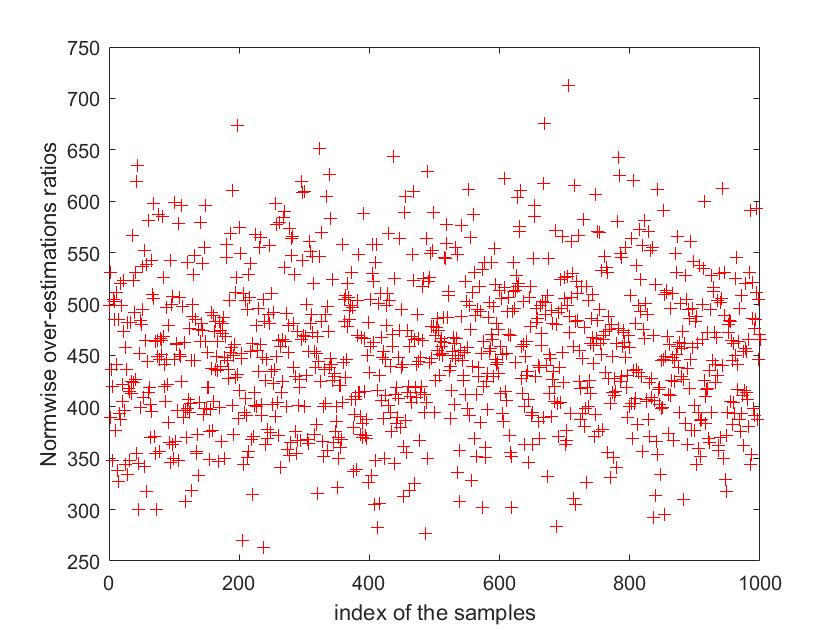}}\hfill
\subfloat[Mixed over-estimation ratios]{%
\includegraphics[width=.33\textwidth]{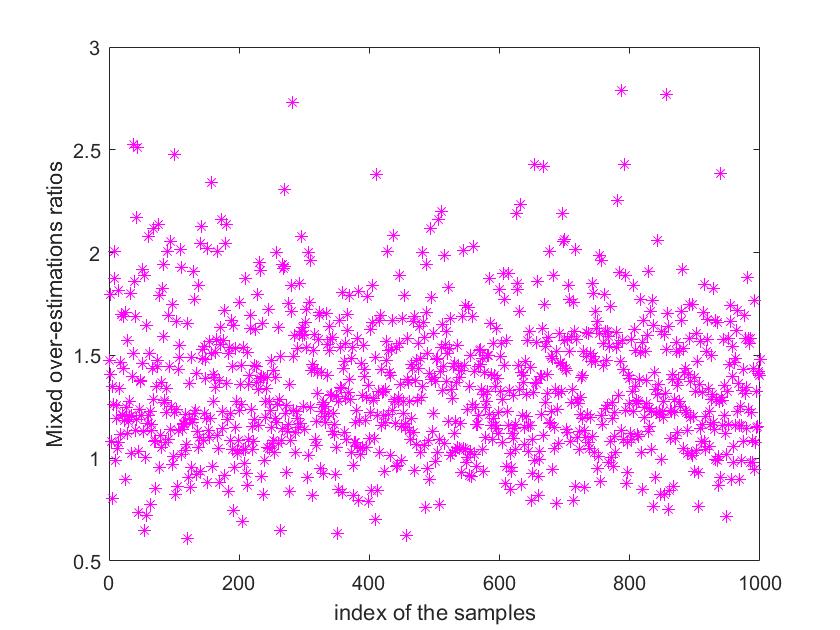}}
\subfloat[Componentwise\,over-estimation ratios]{%
\includegraphics[width=.33\textwidth]{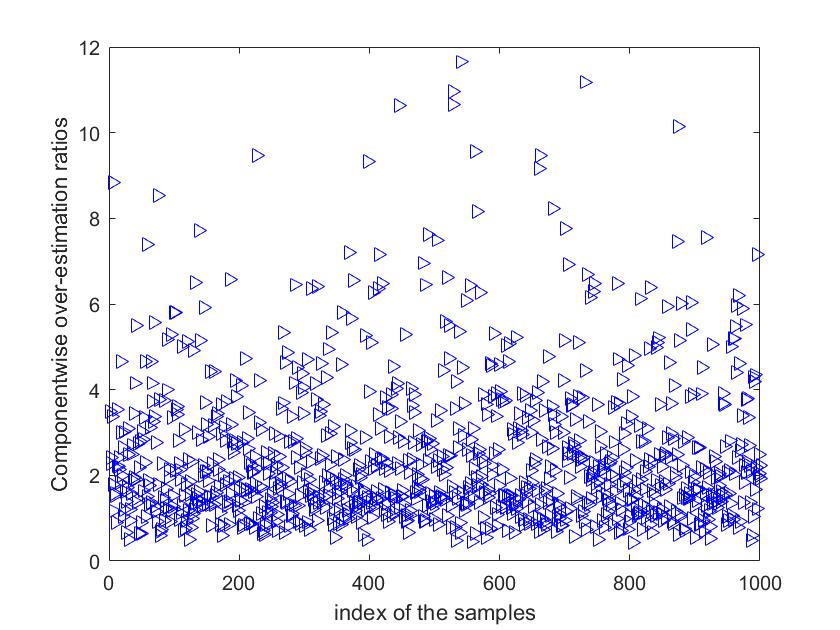}}\hfill
\caption{SCE for the TLS problem under unstructured componentwise perturbations with 1000 samples for Example \ref{example:small2}.}
\label{fig:Toep Un}
\end{figure}

Figure \ref{fig:Toep Un} displays the numerical results for Example \ref{example:small2}, where we generate 1000 random samples $[A~\bsb]$.
From Figure \ref{fig:Toep Un},  we see that $\kappa_{\rm SCE}^{\mathrm{TTLS},(\ell)}$ may seriously overestimate the true relative normwise error. All the normwise over-estimation ratios of 1000 samples is more than $10$, and the mean value of these estimations  is $453.8841$. All elements of the mixed over-estimation ratios of 1000 samples are within $(0.1,~10)$ and  the mean value of these estimations  is $1.3563$. There are only 6 entries of the componentwise over-estimation ratios are greater than 10 and the mean value of these estimations  is $2.4716$. The maximal values of the normwise, mixed and componentwise over-estimation vector are $712.5759,\,   2.7908$ and $ 11.6508$, while their corresponding minimum are $263.3766,\,0.6078,0.4261$ accordingly. Therefore the mixed and componentwise condition estimations $m_{\rm SCE}^{\rm{TTLS},(\ell)}$ and $c_{\rm SCE}^{\rm{TTLS},(\ell)}$ are reliable.

\begin{example}\label{example:toeplitz}
\em Let the Toeplitz matrix $A$ and the vector $\bsb$ are defined in Example \ref{example:toeplitzk6}, where $m=500$. We generate  1000 structured componentwise perturbations $\Delta A_1= \epsilon \,( E_1 \boxdot A)$ and 1000 unstructured componentwise perturbations $\Delta A_2= \epsilon \,( E_2 \boxdot A)$, where $ E_1$ is a random Toeplitz matrix and $ E_2$ is a random matrix whose entries are uniformly distributed in the open internal $(-1,1)$. And $\Delta \bsb= \epsilon \,(f \boxdot \bsb)$, where $\bsf$ is a random matrix with components uniformly distributing in the open interval $(-1,1)$.
\end{example}

For Example  \ref{example:toeplitz}, we use $r_\kappa^S$, $r_m^S$ and $r_c^S$ to  denote the structured normwise, mixed and componentwise over-estimation ratios corresponding to structured componentwise perturbations of $\Delta A_1$ and $\Delta \bsb$,  and $r_\kappa$, $r_m$ and $r_c$ are unstructured normwise, mixed and componentwise over-estimation ratios corresponding to unstructured componentwise perturbations of $\Delta A_2$ and $\Delta \bsb$, where
\[
r_\kappa^{\rm S}=\frac{\kappa_{\rm SCE}^{\mathrm{STTLS},(\ell)}\, \epsilon}
{\|\tilde \bsx_k - \bsx_k\|_2/\|\bsx_k\|_2},\quad
r_m^{\rm S}=\frac{m_{\rm SCE}^{\mathrm{STTLS},(\ell)}\, \epsilon}
{\|\tilde \bsx_k - \bsx_k\|_\infty/\| \bsx_k\|_\infty},\quad
r_c^{\rm S}=\frac{c_{\rm SCE}^{\mathrm{STTLS},(\ell)}\, \epsilon}
{\| \frac{\tilde \bsx_k - \bsx_k}{\bsx_k} \|_\infty},
\]
$$
r_\kappa=\frac{\kappa_{\rm SCE}^{\mathrm{TTLS},(\ell)} \epsilon}
{\|\tilde \bsx_k - \bsx_k\|_2/\|\bsx_k\|_2},\quad
r_m=\frac{m_{\rm SCE}^{\mathrm{TTLS},(\ell)} \epsilon}
{\|\tilde \bsx_k - \bsx_k\|_\infty/\| \bsx_k\|_\infty},\quad
r_c=\frac{c_{\rm SCE}^{\mathrm{TTLS},(\ell)} \epsilon}
{\|  \frac{\tilde \bsx_k - \bsx_k}{\bsx_k} \|_\infty}.
$$

Figure \ref{fig:Toepstru} displays  the numerical results for Example \ref{example:toeplitz} with $\ell=3$ and $\epsilon=10^{-8}$. Here,  in Figure \ref{fig:Toepstru}(A)--Figure \ref{fig:Toepstru}(C), the symbol ``+" in the blue color denote the numerical values of $r_\kappa^{\rm S}$, $r_m^{\rm S}$ and $r_c^{\rm S}$  corresponding to 1000 structured perturbations 
 while  the symbol ``*" in the red color denote the numerical values of $r_\kappa$, $r_m$ and $r_c$ corresponding to 1000 unstructured perturbations. 

From Figure \ref{fig:Toepstru}, we observe that  the mixed and componentwise condition estimations $m_{\rm SCE}^{\mathrm{STTLS},(\ell)}$, $c_{\rm SCE}^{\mathrm{STTLS},(\ell)}$, $m_{\rm SCE}^{\mathrm{TTLS},(\ell)}$ and $c_{\rm SCE}^{\mathrm{TTLS},(\ell)}$ are reliable, while the structured normwise condition estimation $\kappa_{\rm SCE}^{\mathrm{STTLS},(\ell)}$ may seriously over-estimate the true relative normwise error. Furthermore, we can also conclude  that the over-estimation ratios associated with the structured mixed and component condition numbers are smaller than the unstructured counterparts  in most cases, which are consistent with  the conclusion in Proposition \ref{pro:relation of the unstru and the stru}. The mean values of $r_m^{\rm S}$, $r_m$, $r_c^{\rm S}$, and $r_c$ of $1000$ samples are  $1.7140$, $2.4642$, $1.7140$, and $2.4642$ respectively.  Moreover, all these unstructured and structured mixed and componentwise condition over-estimation ratios  are with $(0.1,10)$, which indicate mixed and componentwise condition estimations are reliable.


\begin{figure}[htb]
\centering
\subfloat[Unstructured and structured normwise over-estimation ratios]
{%
\includegraphics[width=.33\textwidth]{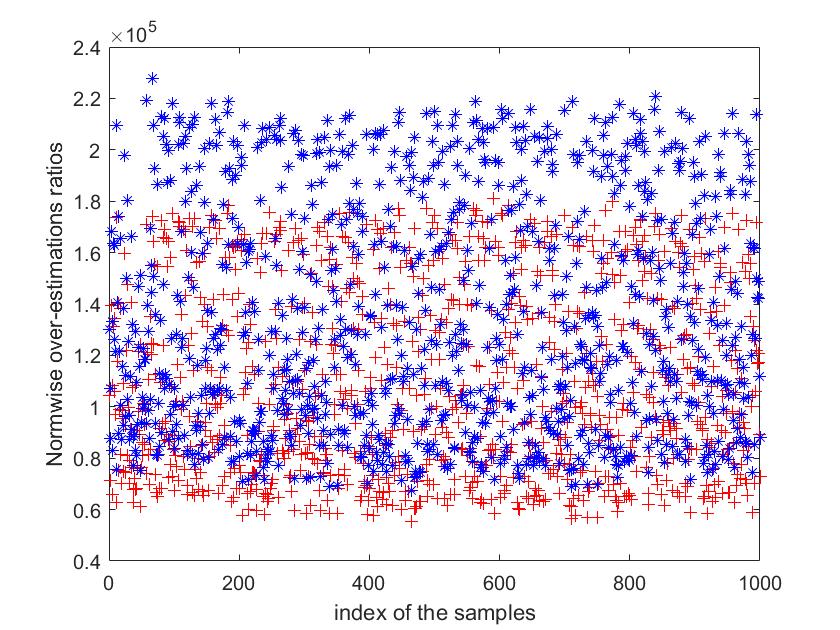}}\hfill
\subfloat[Unstructured and structured mixed over-estimation ratios]{%
\includegraphics[width=.33\textwidth]{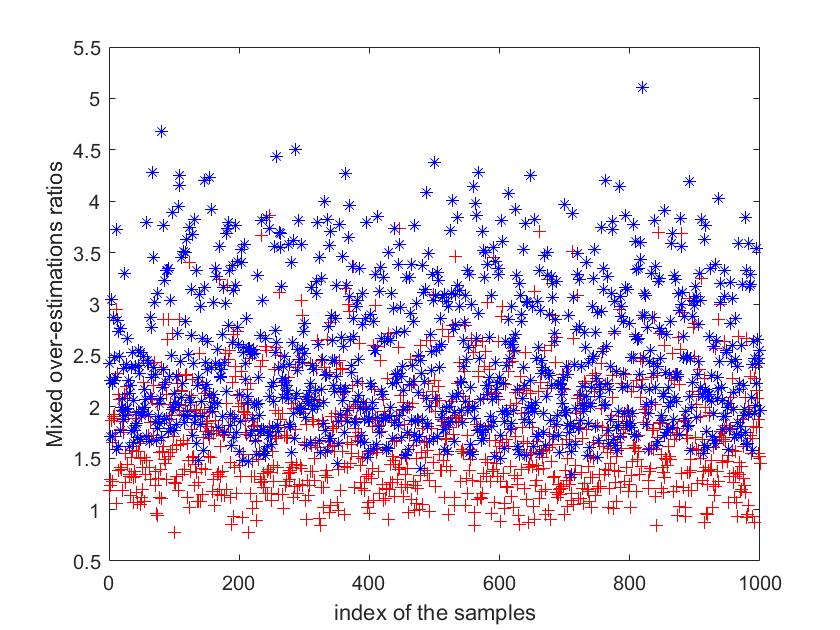}}
\subfloat[Unstructured and structured componentwise over-estimation ratios]{%
\includegraphics[width=.33\textwidth]{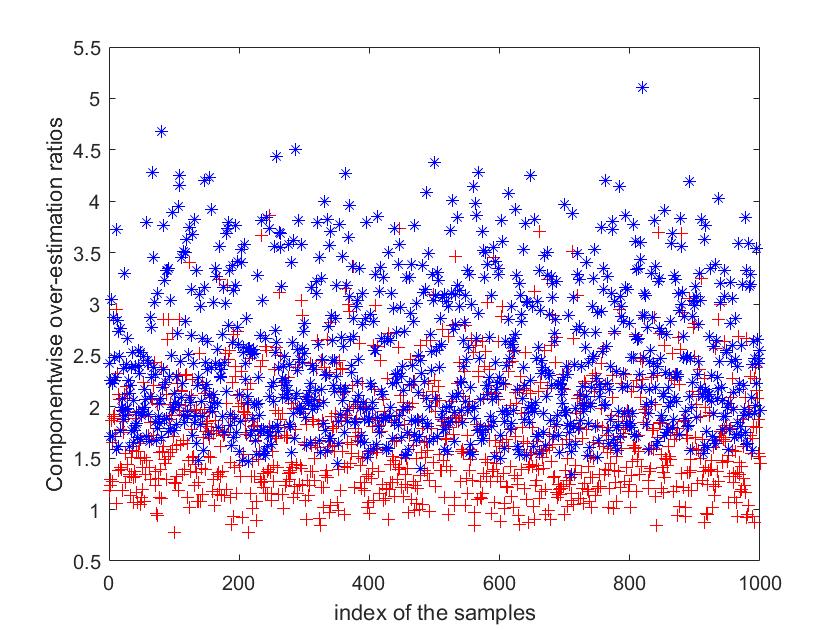}}\hfill
\caption{\small{SCE for the TLS problem under unstructured and structured componentwise perturbations with 1000 unstructured and structured  perturbations for Example \ref{example:toeplitz}}.}
\label{fig:Toepstru}
\end{figure}


\begin{example}\label{example:5}
\em
This example comes from the model that restructures the image named Shepp-Logan ``head phantom'' (Shepp and Logan 1974) by the TTLS technique, which is widely used in inverse scattering studies. In fact, the TTLS method has been used to study the ultrasound inverse scattering imaging \cite{Chao2003A}. Here, we utilize the $\textsc{MATLAB}$ file ``paralleltomo.m'' from the $\textsc{testprobs}$ suite\footnote{Netlib: http://www.netlib.org/numeralgo/ or GitHub: https://github.com/jakobsj/AIRToolsII.} to create parallel-beam CT test problem and obtain the exact phantom. The input parameters of ``paralleltomo.m'' are set to be $N=40$, $\theta = 0:5:175$, and $p = 55$, hence we can obtain  a $1834$-by-$1600$  matrix $A$ and $1834$-by-$1$ right-hand vector $\bsb$. The 500 perturbations $\Delta A$ and $\Delta \bsb$ of $A$ and $\bsb$ are generated  as in \eqref{de:1}.
\end{example}

Table \ref{table:phantom} lists  the condition estimations $\kappa_{\rm SCE}^{\mathrm{STTLS},(\ell)}$, $c_{\rm SCE}^{\mathrm{TTLS},(\ell)}$ and $m_{\rm SCE}^{\mathrm{TTLS},(\ell)}$ with truncation level $k=1536$ and the corresponding relative errors with respect to  different  magnitude  $\epsilon$ of the perturbation for Example \ref{example:5}. From Table \ref{table:phantom}, we can see that the relative errors are bounded by the product of $\epsilon $ and corresponding condition estimations, which means that the proposed condition estimations can  give reliable  error bounds.  The componentwise condition estimation $c_{\rm SCE}^{\mathrm{TTLS},(\ell)}$ is much larger than
$\kappa_{\rm SCE}^{\mathrm{STTLS},(\ell)}$ and $m_{\rm SCE}^{\mathrm{TTLS},(\ell)}$ since the minimum absolute component of the TTLS solution $\bsx_k$ is too small, which is order of $10^{-6}$. Hence the component of the TTLS solution $\bsx_k$ in the sense of the tiny magnitude  is very sensitive to small perturbations on the underlying component of $\bsx_k$. 

%
%
%
%

\begin{table}\centering
\caption{\label{table:phantom}\small Comparison of the relative errors and SCE-based estimations by Algorithms 1 and 2 under 500 perturbations with different perturbation magnitudes for Example \ref{example:5}.}
{\tiny
\begin{tabular}{ccccccccc}\hline
$\epsilon$&$10^{-1}$ &$10^{-2}$ & $10^{-3}$ & $10^{-4}$& $10^{-5}$& $10^{-6}$& $10^{-7}$& $10^{-8}$  \vspace{0.05cm}\\
\hline
$\frac{\left\|\tilde \bsx_k -\bsx_k\right\|_{2}}{\|\bsx_k\|_{2}}$&$ 3.25\cdot 10^{-2}$ &$ 3.25\cdot 10^{-3}$ & $ 3.26\cdot 10^{-4}$ & $3.26\cdot 10^{-5}$ & $3.26\cdot 10^{-6}$ & $3.26\cdot 10^{-7}$& $ 3.26\cdot 10^{-8}$& $3.26\cdot 10^{-9}  $\vspace{0.1cm}\\
$\kappa_{\rm SCE}^{\mathrm{TTLS},(\ell)}$&$ 3.33\cdot 10^{4}$ &$ 3.33\cdot 10^{4}$  &$3.33\cdot 10^{4}$ &$ 3.33\cdot 10^{4}$ &$ 3.33\cdot 10^{4}$ &$ 3.333\cdot 10^{4}$ &$ 3.33\cdot 10^{4}$ &$ 3.33\cdot 10^{4}$    \vspace{0.05cm}\\
\hline
$\frac{\left\|\tilde \bsx_k -\bsx_k\right\|_{\infty}}{\|x_k\|_{\infty}}$&$ 2.62\cdot 10^{-2}$ &$ 2.82\cdot 10^{-3}$ & $ 2.89\cdot 10^{-4}$ & $ 2.90\cdot 10^{-5}$& $ 2.90\cdot 10^{-6}$&$ 2.90\cdot 10^{-7}$&$ 2.90\cdot 10^{-8}$ &$ 2.90\cdot 10^{-9}$\vspace{0.05cm}\\
$m_{\rm SCE}^{\mathrm{TTLS},(\ell)}$&$ 3.74\cdot 10^{1}$ &$ 3.74\cdot 10^{1}$  &$ 3.74\cdot 10^{1}$ &$3.74\cdot 10^{1}$ &$3.74\cdot 10^{1}$ &$ 3.74\cdot 10^{1}$ &$ 3.74\cdot 10^{1}$ &$ 3.74\cdot 10^{1}$    \vspace{0.05cm}\\
\hline
$\left\|\frac{\tilde \bsx_k -\bsx_k}{\bsx_k}\right\|_{\infty}$&$ 3.65\cdot 10^{2}$ &$ 7.79\cdot 10^{1}$ & $ 8.38\cdot 10^{0}$ & $ 8.44\cdot 10^{-1}$& $ 8.44\cdot 10^{-2}$&$ 8.44\cdot 10^{-3}$&$ 8.44\cdot 10^{-4}$&$ 8.44\cdot 10^{-5}$\vspace{0.05cm}\\
$c_{\rm SCE}^{\mathrm{TTLS},(\ell)}$&$1.85\cdot 10^{6}$ &$1.85\cdot 10^{6}$&$1.85\cdot 10^{6}$&$1.85\cdot 10^{6}$&$1.85\cdot 10^{6}$&$1.85\cdot 10^{6}$&$1.85\cdot 10^{6}$&$1.85\cdot 10^{6}$\vspace{0.05cm}\\
\hline
\end{tabular}
}
\end{table}

\section{\bf Concluding remarks}\label{sec:con}

In this paper, we study the mixed and componentwise condition numbers of the TTLS problem under the genericity condition. We also consider the structured condition estimation for the STTLS problem and investigate the relationship between the unstructured condition numbers and the corresponding  structured counterparts. When the TTLS problem degenerates the untrunctated TLS problem, we prove that condition numbers for the TTLS problem can recover the previous condition numbers for the TLS problem from their explicit expressions. Based on SCE,   normwise, mixed and componentwise condition estimations algorithms are proposed for the TTLS problem, which can be integrated into the SVD-based direct solver for the TTLS problem. Numerical examples indicate that, in practice, it is better to adopt the componentwise perturbation analysis for the TTLS problem and the proposed algorithms are reliable, which provide posterior error estimations of high accuracy.  The results in this paper can be extended to the truncated singular value solution of a linear ill-posed problem \cite{BergouThe}. We will report our progresses on the above topic elsewhere in the future.


%

\end{document}